\newcommand{\N}{{\mathbf N}}
\newcommand{\C}{{\mathbb C}}
\newcommand{\R}{{\mathbb R}}
\newcommand{\Z}{{\mathbb Z}}
\newcommand{\Q}{{\mathbb Q}}
\newcommand{\T}{{\mathcal T}}
\newcommand{\Gal}{{\mathrm{Gal}}}
\newcommand{\Sel}{{\mathrm{Sel}}}
\newcommand{\dimF}{{\mathrm{dim}_{\mathbb{F}_2}}}
\newcommand{\Ftwo}{{\mathbb{F}_2}}
\newcommand{\F}{{\mathbb{F}}}
\newcommand{\Zt}{{\mathbb{Z}/2\mathbb{Z}}}
\newcommand{\hatphi}{{ \hat \phi }}
\newcommand{\p}{{ \mathfrak{p} }}
\newcommand{\ord}{{ \mathrm{ord} }}
\newtheorem{theorem}{\bf{Theorem}}[section]
\newtheorem{proposition}[theorem]{\bf{Proposition}}
\newtheorem{lemma}[theorem]{\bf{Lemma}}
\theoremstyle{definition}
\newtheorem{remark}[theorem]{\bf{Remark}}
\DeclareSymbolFont{cyrletters}{OT2}{wncyr}{m}{n}
\DeclareMathSymbol{\Sha}{\mathalpha}{cyrletters}{"58}
\begin{document}
\bibliographystyle{plain}


\title[{Selmer ranks of twists of elliptic curves with partial two-torsion}]{The distribution of 2-Selmer ranks of quadratic twists of elliptic curves with partial two-torsion}

\author{Zev Klagsbrun}
\email{zev.klagsbrun@gmail.com}

\author{Robert J. Lemke Oliver}
\email{rjlo@stanford.edu}



\begin{abstract}This paper presents a new result concerning the distribution of 2-Selmer ranks in the quadratic twist family of an elliptic curve over an arbitrary number field $K$ with a single point of order two that does not have a cyclic 4-isogeny defined over its two-division field. We prove that at least half of all the quadratic twists of such an elliptic curve have arbitrarily large 2-Selmer rank, showing that the distribution of 2-Selmer ranks in the quadratic twist family of such an elliptic curve differs from the distribution of 2-Selmer ranks in the quadratic twist family of an elliptic curve having either no rational two-torsion or full rational two-torsion.
\end{abstract}
\maketitle



\section{Introduction}

\subsection{Distributions of Selmer Ranks}

Let $E$ be an elliptic curve defined over a number field $K$ and let $\Sel_2(E/K)$ be its 2-Selmer group (see Section \ref{bg} for its definition). We define the \textbf{2-Selmer rank of $E/K$}, denoted $d_2(E/K)$, by \begin{equation*}d_2(E/K) =  \dimF  \Sel_2(E/K) - \dimF E(K)[2].\end{equation*}

In 1994, Heath-Brown proved that the 2-Selmer ranks of all of the quadratic twists of the congruent  number curve $E/\Q$ given by $y^2 = x^3 - x$ had a particularly nice distribution. In particular, he showed that there are explicits constants $\alpha_0, \alpha_1, \alpha_2, \ldots$ summing to one such that \begin{equation*}\lim_{X \rightarrow \infty}\frac{|\{ d  \text{ squarefree } |d| < X  : d_2(E^d/\Q) = r  \} |}{|\{ d  \text{ squarefree } |d| < X \}|} = \alpha_r\end{equation*} for every $r \in  \Z^{\ge 0}$, where $E^d$ is the quadratic twist of $E$ by $d$ \cite{HB}.  This result was extended by Swinnerton-Dyer and Kane to all elliptic curves $E$ over $\Q$ with $E(\Q)[2] \simeq \Zt \times \Zt$ that do not have a cyclic 4-isogeny defined over $K$ \cite{Kane}, \cite{SD}.

A similar result was obtained by  Klagsbrun, Mazur, and Rubin for elliptic curves $E$ over a general number field $K$ with $\Gal(K(E[2])/K) \simeq \mathcal{S}_3$, where squarefree $d$ are replaced by quadratic characters of $K$ and a suitable ordering of all such characters is taken \cite{KMR2}. 

In this work we show that this type of result does not hold when $E(K)[2] \simeq \Zt$. In particular, we prove the following:

\begin{theorem}\label{mainthm} For $d \in \mathcal{O}_K$, let $\chi_d$ be the quadratic character of $K$ that cuts out the extension $K(\sqrt{d})$ and define $$C(K, X):=\{\chi_d: |\mathbf{N}_{K/\Q} d |< X\}.$$
Let $E$ be an elliptic curve defined over $K$ with $E(K)[2] \simeq \Z/2\Z$ that does not have a cyclic isogeny defined over $K(E[2])$. Then for any fixed $r$, \begin{equation*}\liminf_{X\rightarrow \infty}\frac{ \left | \{ \chi \in C(K, X) : d_2(E^\chi/K) \ge r \} \right | }{|C(K, X)|}  \ge \frac{1}{2}\end{equation*} where $E^\chi$ is the quadratic twist of $E$ by any $d \in \mathcal{O}_K$ with $\chi_d = \chi$.
\end{theorem}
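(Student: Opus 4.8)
The plan is to exploit the $K$-rational $2$-isogeny $\phi\colon E \to E'$ whose kernel is generated by the rational point of order two, together with its dual $\hatphi\colon E' \to E$. Because $E(K)[2]\simeq \Zt$, the Weil pairing forces both $E[\phi]$ and $E'[\hatphi]$ to be isomorphic to $\Zt$ with trivial Galois action; since $[-1]$ acts trivially on $2$-torsion, the same holds for every quadratic twist, so each of $H^1(K,E^\chi[\phi^\chi])$ and $H^1(K,{E'}^{\chi}[\hatphi^\chi])$ is canonically $K^\times/(K^\times)^2$ by Kummer theory, with the $\phi^\chi$- and $\hatphi^\chi$-Selmer groups cut out by local conditions depending on $\chi$. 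The standard descent exact sequence relating $\Sel_{\phi^\chi}$, $\Sel_2$, and $\Sel_{\hatphi^\chi}$ has kernel of dimension at most one (it is ${E'}^{\chi}(K)[\hatphi^\chi]/\phi^\chi E^\chi(K)[2]$), so that $\dimF\Sel_2(E^\chi/K)\ge \dimF\Sel_{\phi^\chi}(E^\chi/K)-1$ and hence $d_2(E^\chi/K)\ge \dimF\Sel_{\phi^\chi}(E^\chi/K)-2$. Thus it suffices to show that at least half of the $\chi\in C(K,X)$ satisfy $\dimF\Sel_{\phi^\chi}(E^\chi/K)\ge r+2$.

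Next I would compare the two Selmer groups via the Greenberg--Wiles formula for the dual local conditions attached to $\phi$ and $\hatphi$, which (since the relevant global $H^0$ terms cancel) expresses their difference as a sum of purely local contributions:
\begin{equation*}
\dimF\Sel_{\phi^\chi}(E^\chi/K)-\dimF\Sel_{\hatphi^\chi}({E'}^{\chi}/K)=\sum_v s_v(\chi),\qquad s_v(\chi):=\dimF\frac{{E'}^{\chi}(K_v)}{\phi^\chi E^\chi(K_v)}-1.
\end{equation*}
Because $\dimF\Sel_{\hatphi^\chi}({E'}^{\chi}/K)\ge 0$, we get $\dimF\Sel_{\phi^\chi}(E^\chi/K)\ge S(\chi):=\sum_v s_v(\chi)$, and it is enough to prove that at least half of the $\chi\in C(K,X)$ satisfy $S(\chi)\ge r+2$.

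The terms $s_v(\chi)$ vanish away from $2$, $\infty$, the bad primes of $E$, and the primes ramifying in $\chi$; the first three contribute a bounded constant $S_0$, so $S(\chi)=S_0+\sum_{v\mid\mathfrak{f}(\chi)}s_v(\chi)$. A local computation at a prime $v\nmid 2$ of good reduction where $\chi$ ramifies — so that $E^\chi$ acquires additive reduction — shows that the local image is either all of $K_v^\times/(K_v^\times)^2$ or trivial, whence $s_v(\chi)\in\{+1,-1\}$, the value being governed by a Frobenius splitting condition at $v$ in a fixed finite extension of $K$ built from $E[2]$. The hypothesis that $E$ has no cyclic isogeny defined over $K(E[2])$ is precisely what makes this extension nondegenerate, so that by the Chebotarev density theorem the values $\pm 1$ occur with equal density and infinitely many primes of each type are available. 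Consequently, as $X\to\infty$ the quantity $S(\chi)$ behaves like a mean-zero symmetric random walk whose number of steps — the typical number of prime factors of the twisting parameter — tends to infinity, so its variance tends to infinity; for such a family $\Pr[S(\chi)\ge r+2]\to\tfrac12$ for every fixed $r$, giving the claimed $\liminf\ge\tfrac12$.

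The main obstacle is this last, analytic step: making rigorous the convergence of the distribution of $S(\chi)$ over $C(K,X)$ to a symmetric, spreading distribution. This requires, first, the exact local evaluation of $s_v(\chi)$ together with the verification that the governing Frobenius classes equidistribute — the step where the no-cyclic-isogeny hypothesis is essential — and, second, a counting argument weighting twists by the number and splitting type of their ramified primes, in the spirit of Heath-Brown, Kane, and Klagsbrun--Mazur--Rubin, to show that the mass of $S(\chi)$ near any fixed value vanishes while symmetry splits the remaining mass evenly across the two sides. One must also bound uniformly the constant $S_0$ and the finitely many degenerate primes, and check that the loss of $2$ from the descent sequence does not interfere with the threshold $r+2$.
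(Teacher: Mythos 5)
Your strategy coincides with the paper's: descend through the rational $2$-isogeny $\phi$, bound $d_2(E^\chi/K)$ below by $\dimF \Sel_{\phi^\chi}(E^\chi/K)-2$ via the exact sequence relating the $\phi$-, $2$-, and $\hatphi$-Selmer groups, discard the nonnegative $\hatphi$-Selmer term in the Cassels (Greenberg--Wiles) local formula so that only a sum of local terms over ramified primes remains, and argue that this sum is a symmetric, mean-zero additive quantity whose spread grows, so that its upper tail at any fixed height has density $\tfrac12$. That is exactly the paper's route (Theorems \ref{gss} and \ref{prodform2}, Proposition \ref{TT}, and Section \ref{pfofmain}). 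The genuine gap is that the entire analytic core of the paper is the step you defer as ``the main obstacle'': the convergence in distribution of $S(\chi)$ over $\chi\in C(K,X)$ is precisely the Erd\H{o}s--Kac theorem for additive functions on quadratic characters of a number field (Theorem \ref{EKthm}), which the paper proves in Section \ref{pfEKthm} by the method of moments following Granville--Soundararajan, resting on a count of squarefree ideals in a fixed ideal class with prescribed gcd against an auxiliary modulus (Lemma \ref{lem:dist}). The assertion that $\Pr[S(\chi)\ge r+2]\to\tfrac12$ is not a routine consequence of symmetry of the steps; one needs the variance to dominate all error terms uniformly in the moment index, and without this the argument does not close.

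Two secondary points. First, your local analysis is off: at a ramified prime $\p\nmid 2$ of good reduction the local condition $H^1_\phi(K_\p,C^\chi)$ can have $\F_2$-dimension $0$, $1$, or $2$ (Lemma \ref{localconds}), governed by the pair of symbols $\left(\frac{\Delta}{\p}\right)$ and $\left(\frac{\Delta'}{\p}\right)$, so the local term lies in $\{-1,0,+1\}$ rather than $\{\pm1\}$; the dimension-one cases (which occur for a density-$\tfrac12$ set of ramified primes) contribute $0$ and are why the variance is $\tfrac12\log\log X$ rather than $\log\log X$. Second, the role of the no-cyclic-$4$-isogeny hypothesis is not to make the $+1$ and $-1$ densities equal (that holds by symmetry in any case) but to make them positive: it forces $\Delta\Delta'$ to be a nonsquare (Proposition \ref{charac}), which is what guarantees $\sigma_g(X)\to\infty$; if $\Delta\Delta'$ were a square the additive function would vanish identically and the method would give nothing.
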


In particular, this shows that there is not a distribution function on 2-Selmer ranks within the quadratic twist family of $E$.

Theorem \ref{mainthm} is an easy consequence of the following result. 

\begin{theorem}\label{Tconverge}  Let $E$ be an elliptic curve defined over $K$ with $E(K)[2] \simeq \Z/2\Z$ that does not have a cyclic isogeny defined over $K(E[2])$. Then the normalized distribution \begin{equation*}\displaystyle \frac{P_r(\T(E/E^\prime), X)}{\sqrt{\frac{1}{2} \log \log X}}\end{equation*} converges weakly to the Gaussian distribution \begin{equation*}G(z) = \frac{1}{\sqrt{2\pi}} \int_{-\infty}^{z} \! e^\frac{-w^2}{2} \, \mathrm{d}w,\end{equation*} where \begin{equation*} P_r(\T(E/E^\prime), X) = \displaystyle{ \frac{ |\{\chi \in C(K, X) : \ord_2 \T(E^\chi/E^{\prime \chi}) \le r \}| }{|C(K, X)|} }\end{equation*} for $X \in \R^+$, $r \in \Z^{\ge 0}$, and $\T(E^\chi/E^{\prime \chi})$ as defined in Section \ref{bg}.

\end{theorem}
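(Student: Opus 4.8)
The plan is to recognize $\mathrm{ord}_2 \mathcal{T}(E^\chi/E'^\chi)$ as an additive arithmetic function of the twisting datum and to prove an Erd\H{o}s--Kac--type central limit theorem for it as $\chi$ ranges over $C(K,X)$. Let $\phi\colon E\to E'$ be the $2$-isogeny whose kernel is the rational point of order two, and let $\hat\phi\colon E'\to E$ be its dual. The Cassels-type comparison of the $\phi$- and $\hat\phi$-Selmer groups, which underlies the definition of $\mathcal{T}$ in Section \ref{bg}, expresses $\mathcal{T}(E^\chi/E'^\chi)$ as a product of purely local factors indexed by the places of $K$. Taking $\mathrm{ord}_2$, I would write $\mathrm{ord}_2 \mathcal{T}(E^\chi/E'^\chi)=\sum_v \delta_v(\chi)$, where each $\delta_v(\chi)$ depends only on the local components of $\chi$ at $v$.

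The first reduction isolates the places responsible for the fluctuation. At the archimedean places and the finitely many places of bad reduction, $\delta_v(\chi)$ is bounded uniformly in $\chi$, so their total contribution is $O(1)$ and is invisible after dividing by $\sqrt{\tfrac12\log\log X}$. The fluctuating part comes from the finite primes $\mathfrak{p}$ of good reduction at which $\chi$ ramifies, equivalently the primes dividing the squarefree parameter $d$. A direct local computation shows that at such a prime $\delta_\mathfrak{p}(\chi)$ takes values in $\{-1,0,1\}$, controlled by the Frobenius of $\mathfrak{p}$ in the quadratic extension $K(E[2])/K$ together with a quadratic residue symbol attached to $d$. The hypothesis that $E$ admits no cyclic isogeny over $K(E[2])$ enters here essentially: it keeps the relevant Galois action nondegenerate, forcing these symbols to equidistribute with mean zero so that no systematic bias pushes the sum away from $0$. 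In the presence of such an isogeny the local terms would acquire a bias and the limit would degenerate, which is precisely the phenomenon separating this case from the $\mathcal{S}_3$ and full-two-torsion cases.

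Writing $\mathrm{ord}_2\mathcal{T}(E^\chi/E'^\chi)=\sum_{\mathfrak{p}\mid d}\varepsilon_\mathfrak{p}+O(1)$, I would then establish the central limit theorem by the method of moments. An application of the Chebotarev density theorem, organized by a Selberg--Delange (or sieve) count of squarefree $d$ with prescribed local behavior, shows that the $\varepsilon_\mathfrak{p}$ behave asymptotically like independent mean-zero variables. Summing their second moments against the density $\sum_{\mathbf{N}\mathfrak{p}\le X}(\mathbf{N}\mathfrak{p})^{-1}\sim\log\log X$ of ramified primes, restricted to the density-$\tfrac12$ set of primes whose Frobenius in $\mathrm{Gal}(K(E[2])/K)$ makes $\varepsilon_\mathfrak{p}$ nonzero, produces total variance $\sim\tfrac12\log\log X$ and hence the normalization in the statement. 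Matching every moment of the normalized sum to the corresponding Gaussian moment then yields the asserted weak convergence of $P_r(\mathcal{T}(E/E'),X)/\sqrt{\tfrac12\log\log X}$ to $G$.

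I expect the main obstacle to be the moment computation itself, and specifically the control of correlations among the $\varepsilon_\mathfrak{p}$ at distinct primes. They are not genuinely independent: the global product formula and the constraints built into counting the characters of $C(K,X)$ couple them together. The crux is therefore to show that the mixed moments factor to leading order, with errors small enough to survive summation over the $\approx\log\log X$ primes dividing a typical $d$; this demands Chebotarev estimates uniform across a growing range of moduli together with a careful squarefree sieve. Once this asymptotic independence is secured, the Erd\H{o}s--Kac argument closes and Theorem \ref{Tconverge} follows.
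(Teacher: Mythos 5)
Your proposal follows essentially the same route as the paper: Cassels's product formula turns $\ord_2 \T(E^\chi/E^{\prime \chi})$ into an additive function supported on the primes ramified in the twist (plus an $O(1)$ contribution from the places dividing $2\Delta\infty$), and an Erd\H{o}s--Kac theorem for additive functions on quadratic characters of $K$, proved by the method of moments via a squarefree-ideal count, yields the Gaussian limit with variance $\sim\tfrac{1}{2}\log\log X$ coming from the density-$\tfrac12$ set of primes where the local term is nonzero. The one small correction: the local term at a ramified prime $\p$ of good reduction equals $\tfrac{1}{2}\left(\left(\frac{\Delta^\prime}{\p}\right)-\left(\frac{\Delta}{\p}\right)\right)$, so it is governed by the splitting of $\p$ in the two quadratic fields $K(\sqrt{\Delta})$ and $K(\sqrt{\Delta^\prime})$ rather than by an additional symbol attached to $d$, and the no-cyclic-4-isogeny hypothesis enters exactly to guarantee that $\Delta\Delta^\prime$ is a non-square, so that this term is nonzero on a set of primes of density $\tfrac12$ and the mean stays bounded.
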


In turn, Theorem \ref{Tconverge} follows from a variant of the Erd{\H o}s-Kac theorem for quadratic characters of number fields. Let $C(K)$ denote the set of all quadratic characters of $K$. For any $\chi \in C(K)$, we can associate a unique squarefree ideal $D_\chi$ to $\chi$ by taking $D_\chi$ to be the squarefree part of the ideal $\langle d \rangle$, where $d$ is any element of $\mathcal{O}_K$ such that $\chi_d = \chi$. We say that a function $f$ on the ideals of $\mathcal{O}_K$ is \textbf{additive} if $f(\mathfrak{a} \mathfrak{a}^\prime) = f(\mathfrak{a}) + f(\mathfrak{a}^\prime)$ whenever the ideals $\mathfrak{a}$ and $\mathfrak{a}^\prime$ are relatively prime; we define $f(\chi)$ for $\chi\in C(K)$ to be $f(D_\chi)$.




To an additive function $f$, we attach quantities $\mu_f(X)$ and $\sigma_f(X)$, defined to be
\[
\mu_f(X) := \sum_{\N\mathfrak{p} < X} \frac{f(\mathfrak{p})}{\N\mathfrak{p}}, \,\,\,\, \sigma_f(X):=\left(\sum_{\N\mathfrak{p} < X} \frac{f(\mathfrak{p})^2}{\N\mathfrak{p}}\right)^{1/2},
\]
and we prove the following.

\begin{theorem}\label{EKthm}
Suppose that $f$ is an additive function such that $0\leq f(\mathfrak{p})\leq 1$ for every prime $\mathfrak{p}$. If $\sigma_f(X)\to\infty$ as $X\to\infty$, then
\[
\lim_{X\to\infty} \frac{ | \left \{ \chi\in C(K, X) : f(\chi)-\mu_f(X) \leq z \cdot \sigma_f(X) \right \}|}{ |C(K, X)|} = G(z),
\]
\end{theorem}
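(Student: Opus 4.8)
The plan is to prove Theorem \ref{EKthm} by the method of moments. I will show that for every fixed integer $k \ge 0$ the normalized moments
$$M_k(X) := \frac{1}{|C(K,X)|}\sum_{\chi \in C(K,X)}\left(\frac{f(\chi)-\mu_f(X)}{\sigma_f(X)}\right)^k$$
converge as $X \to \infty$ to the $k$-th moment $m_k$ of the standard Gaussian ($m_k = 0$ for $k$ odd, $m_k = (k-1)!!$ for $k$ even); since $G$ is determined by its moments, this yields the asserted weak convergence. The whole argument rests on reading $f(\chi) = \sum_{\p \mid D_\chi} f(\p)$ as a sum of weakly dependent contributions $f(\p)\cdot\mathbf 1[\p \mid D_\chi]$, one per odd prime $\p$, and on showing that in the relevant moment ranges these behave like independent, bounded, mean-zero random variables.

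The first and most important step is a counting/equidistribution lemma. For a squarefree ideal $\mathfrak a = \p_1\cdots\p_j$ of pairwise distinct odd primes I need
$$|\{\chi \in C(K,X) : \mathfrak a \mid D_\chi\}| = \frac{1}{h(\mathfrak a)}\,|C(K,X)|\,\bigl(1+o(1)\bigr) + \mathrm{Error}(\mathfrak a, X),$$
with $h(\mathfrak a) = \prod_i(\N\p_i+1)$ and a uniformly controlled error. The natural route is to study the Dirichlet series $\sum_\chi \N(D_\chi)^{-s}$, refined by $\mathfrak a \mid D_\chi$, which factors as an Euler product: the factor at a prime away from $2$ records whether $\chi$ ramifies there, the factors at $2$ and the archimedean places encode the freedom in the local components of $\chi$, and the $2$-part of the class group enters globally. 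Imposing $\mathfrak a \mid D_\chi$ forces ramification at the $\p_i$, multiplying the $i$-th local factor by roughly $(\N\p_i)^{-1}$ and, after comparison with the unconstrained count, producing the density $1/h(\mathfrak a) = \prod_i(\N\p_i+1)^{-1}$ exactly as in the rational case; a Tauberian argument at the rightmost pole supplies the main term. Note that $(\N\p+1)^{-1}$ differs from $(\N\p)^{-1}$ by a quantity summable over primes, so interchanging them perturbs $\mu_f$ and $\sigma_f^2$ only by $O(1)$, negligible since $\sigma_f(X)\to\infty$.

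With the lemma in hand, I expand $M_k(X)$ via the multinomial theorem after centering each indicator. Writing $\eta_\p(\chi) = \mathbf 1[\p \mid D_\chi] - (\N\p+1)^{-1}$, the numerator becomes a sum over $k$-tuples of primes of $\prod_i f(\p_i)$ times the joint moment $\tfrac{1}{|C(K,X)|}\sum_\chi \prod_i \eta_{\p_i}(\chi)$, and the counting lemma shows these factor, up to admissible error, over the distinct primes present: the $\eta_\p$ are asymptotically independent and mean-zero with variance $\sim\N\p^{-1}$. Crucially, a tuple contributes nothing unless the product of its distinct primes divides some $D_\chi$ of norm below $X$, so the products self-truncate at norm $X$ and no separate truncation of $f$ is required. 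The standard combinatorics then apply: tuples in which some prime occurs exactly once are killed by the centering, the surviving main term comes from the $(k-1)!!$ pairings of the $k$ indices into $k/2$ blocks (for $k$ even), each block contributing a factor $\sim\sum_{\N\p<X} f(\p)^2\N\p^{-1} = \sigma_f(X)^2$, and every coarser partition is lower order because it yields fewer than $k/2$ factors of $\sigma_f$ while $\sigma_f\to\infty$; for $k$ odd there is no pairing and the moment is $o(\sigma_f^k)$. Dividing by $\sigma_f(X)^k$ gives $M_k(X)\to m_k$.

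The main obstacle is the uniform control of the accumulated errors. In the moment expansion one sums $\mathrm{Error}(\mathfrak a, X)$ over all squarefree $\mathfrak a$ of norm up to $X$ with at most $k$ prime factors, and must show the total is $o(\sigma_f(X)^k|C(K,X)|)$. A per-term saving of the shape $X/\N\mathfrak a$ times $\exp(-c\sqrt{\log(X/\N\mathfrak a)})$ (or a power saving) is in principle enough, since the restriction $\omega(\mathfrak a)\le k$ cuts the number of admissible $\mathfrak a$ in each norm range by a factor of essentially $\log X$, and that logarithmic gain dominates any power of $\log\log X$ appearing in $\sigma_f^{k}$; the difficulty is producing such a saving \emph{uniformly} in the divisibility condition $\mathfrak a \mid D_\chi$, which is where the zero-free region and contour analysis for the refined Dirichlet series, and a careful dyadic bookkeeping against the error size, must be pushed through. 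A secondary point, specific to a general $K$ rather than $\Q$, is verifying that the global constant extracted from the Euler product is genuinely independent of $\mathfrak a$ so that it cancels in the ratio $M_k$; this is exactly where the arithmetic of $K$ at $2$, at $\infty$, and its $2$-class group must be pinned down.
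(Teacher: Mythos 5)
Your skeleton coincides with the paper's: the method of moments, the density $\frac{1}{\N\p+1}$ for the event $\p\mid D_\chi$ obtained from the parametrization of characters by squarefree ideals lying in prescribed ideal classes, the centered summands $f(\p)\bigl(\mathbf{1}[\p\mid\mathfrak{a}]-\frac{1}{\N\p+1}\bigr)$, and the Granville--Soundararajan combinatorics in which only square-full prime products survive and the perfect pairings produce the Gaussian moments $c_k\sigma_f^k$. The paper even proves its counting input (Lemma \ref{lem:dist}) by the elementary route rather than your Dirichlet-series/Tauberian route, but that difference is cosmetic.

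The genuine gap is your claim that ``no separate truncation of $f$ is required'' because the prime products ``self-truncate at norm $X$.'' Self-truncation only limits \emph{which} tuples of primes occur; it does nothing to control the accumulated error. The error in the natural (elementary) counting lemma is $O(X^\lambda 3^{\omega(\mathfrak{q})})$ per tuple and does not decay as $\N\mathfrak{q}$ grows, so even after restricting to tuples whose product has norm below $X$ the total error is of size $X^{\lambda}\cdot X(\log\log X)^{k-1}/\log X$, which swamps the main term $\asymp X(\log\log X)^{k/2}$. The paper avoids this by truncating the prime sum at $z=X^{(1-\lambda)/k}$, so that the accumulated error $O(X^{\lambda}3^k\pi_K(z)^k)$ is $o(X)$, and then absorbing the at most $\log X/\log z=O(k)$ primes of norm $\geq z$ dividing any given $D_\chi$ into an $O(k)$ perturbation that is dispatched by a binomial expansion together with Cauchy--Schwarz and the moment bounds of Theorem \ref{thm:additive}. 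Your substitute --- a per-term saving decaying in $X/\N\mathfrak{a}$, uniform in the ideal class and in the imposed divisibility, extracted from a zero-free region for a refined Dirichlet series --- would indeed suffice by the bookkeeping you sketch, but it is exactly the hard analytic statement you leave unproved, and it is strictly stronger than what the theorem requires. Either supply that uniform estimate or adopt the truncation device; as written, the error analysis does not close.
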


In the special case where $K = \Q$, Theorem \ref{mainthm} follows from results recently obtained by Xiong about the distribution of $\dimF \Sel_\phi(E/\Q)$ in quadratic twist families \cite{X}. We are able to obtain results over general number fields by applying different methods to the coarser question about the distribution of $\ord_2 T(E/E^\prime)$ in twist families (see Theorem \ref{Tconverge}).  



\begin{remark}
The methods in the paper can easily be adapted to studying cubic twists of the $j$-invariant 0 curve $y^2 = x^3 + k$ for any number field $K$ with $K(\sqrt{k}, \sqrt{-3})/K$ biquadratic. In that case, Theorem \ref{mainthm} holds with 2-Selmer rank replaced by 3-Selmer rank and quadratic characters replaced by cubic characters.
\end{remark}

\subsection{Layout}
We begin in Section \ref{bg} by recalling the definitions of the 2-Selmer group and the Selmer groups associated with a 2-isogeny $\phi$ and presenting some of the connections between them. In Section \ref{twistplace}, we examine the behavior of the local conditions for the $\phi$-Selmer group under quadratic twist and show how that quantity $\T(E^\chi/E^{\prime \chi})$ can be related to the value $f(\chi)$ of an additive function $f$ on $C(K)$. Theorem \ref{EKthm} is proved in Section \ref{pfEKthm} and we conclude with the proofs of Theorems \ref{mainthm} and \ref{Tconverge} in Section \ref{pfofmain}.


\subsection*{Acknowledgement}
The first author would like to express his thanks to Karl Rubin for his helpful comments and suggestions, to Ken Kramer for a series of valuable discussions, and to Michael Rael and Josiah Sugarman for helpful conversations regarding the Erd{\H o}s-Kac theorem. The first author was supported by NSF grants DMS-0457481, DMS-0757807, and DMS-0838210.

\section{Selmer Groups}\label{bg}

We begin by recalling the definition of the 2-Selmer group. If $E$ is an elliptic curve defined over a field $K$, then $E(K)/2E(K)$ maps into $H^1(K, E[2])$ via the Kummer map. The following diagram commutes for every place $v$ of $K$, where $\delta$ is the Kummer map.

\begin{center}\leavevmode
\begin{xy} \xymatrix{
E(K)/2E(K)  \ar[d] \ar[r]^{\delta} & H^1(K, E[2]) \ar[d]^{Res_v} \\
E(K_v)/2E(K_v)   \ar[r]^{\delta} & H^1(K_v, E[2]) }
\end{xy}\end{center}

We define a distinguished local subgroup $H^1_f(K_v, E[2]) \subset H^1(K_v, E[2])$ as the image $\delta \left ( E(K_v)/2E(K_v) \right ) \subset H^1(K_v, E[2])$ for each place $v$ of $K$ and we define the \textbf{2-Selmer group} of $E/K$, denoted $\Sel_2(E/K)$, by \begin{equation*}\Sel_2(E/K) = \ker \left ( H^1(K, E[2]) \xrightarrow{\sum res_v} \bigoplus_{v\text{ of } K} H^1(K_v, E[2])/H^1_f(K_v, E[2]) \right ).\end{equation*}
The  $2$-Selmer group is a finite dimensional $\F_2$-vector space that sits inside the exact sequence of $\F_2$-vector spaces \begin{equation*}0\rightarrow E(K)/2E(K) \rightarrow \Sel_2(E/K) \rightarrow \Sha(E/K)[2] \rightarrow 0\end{equation*} where $\Sha(E/K)$ is the Tate-Shafaravich group of $E$.


If $E(K)$ has a single point of order two, then there is a two-isogeny $\phi:E \rightarrow E^\prime$ between $E$ and $E^\prime$ with kernel $C = E(K)[2]$. This isogeny gives rise to two Selmer groups.



We have a short exact sequence of $G_K$ modules \begin{equation} 0 \rightarrow C \rightarrow E(\overline{K}) \xrightarrow{\phi}  E^\prime(\overline{K}) \rightarrow 0\end{equation} which gives rise to a long exact sequence of cohomology groups \begin{equation*}0 \rightarrow C \rightarrow E(K) \xrightarrow{\phi} E^\prime(K) \xrightarrow{\delta} H^1(K, C) \rightarrow H^1(K, E) \rightarrow H^1(K, E^\prime) \ldots\end{equation*} The map $\delta$ is given by $\delta(Q)(\sigma)= \sigma(R) - R$ where $R$ is any point on $E(\overline{K})$ with $\phi(R) = Q$. 

This sequence remains exact when we replace $K$ by its completion $K_v$ at any place $v$, which gives rise to the following commutative diagram.

\begin{center}\leavevmode
\begin{xy} \xymatrix{
E^\prime(K)/\phi(E(K))  \ar[d] \ar[r]^\delta & H^1(K, C) \ar[d]^{Res_v} \\
E^\prime(K_v)/\phi(E(K_v))   \ar[r]^\delta & H^1(K_v, C) }
\end{xy}\end{center}

In a manner similar to how we defined the 2-Selmer group, we define distinguished local subgroups $H^1_\phi(K_v, C)\subset  H^1(K_v, C)$ as the image of $E^\prime(K_v)/\phi(E(K_v))$ under $\delta$ for each place $v$ of $K$.  We define the \textbf{$\mathbf \phi$-Selmer group of $\mathbf E$}, denoted $\Sel_\phi(E/K)$ as  \begin{equation*}\Sel_\phi(E/K) = \ker \left ( H^1(K, C) \xrightarrow{\sum res_v} \bigoplus_{v \text{ of } K} H^1(K_v, C)/H^1_\phi(K_v, C) \right ).\end{equation*} 

The isogeny $\phi$ on $E$ gives gives rise to a dual isogeny $\hat \phi$ on $E^\prime$ with kernel $C^\prime = \phi(E[2])$. Exchanging the roles of $(E, C, \phi)$ and $(E^\prime, C^\prime, \hat \phi)$ in the above defines the $\mathbf{\hat \phi}$\textbf{-Selmer group}, $\Sel_\hatphi(E^\prime/K)$, as a subgroup of $H^1(K, C^\prime)$. The groups $\Sel_\phi(E/K)$ and $\Sel_\hatphi(E^\prime/K)$ are finite dimensional $\Ftwo$-vector spaces and we can compare the sizes of the $\phi$-Selmer group, the $\hat \phi$-Selmer group, and the 2-Selmer group using the following two theorems.



\begin{theorem}\label{gss}The $\phi$-Selmer group, the $\hat \phi$-Selmer group, and the 2-Selmer group sit inside the exact sequence \begin{equation}0 \rightarrow E^\prime(K)[2]/\phi(E(K)[2]) \rightarrow \Sel_\phi(E/K) \rightarrow \Sel_2(E/K) \xrightarrow{\phi}\Sel_\hatphi(E^\prime/K).\end{equation}
\end{theorem}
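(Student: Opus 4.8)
The plan is to embed all three Selmer groups in Galois cohomology and compare them through the long exact sequence attached to the short exact sequence of $G_K$-modules
\[
0 \to C \to E[2] \xrightarrow{\phi} C' \to 0,
\]
which is exact because $\phi(E[2]) = \ker\hatphi = C'$. Writing $i_*\colon H^1(K,C)\to H^1(K,E[2])$ and $\phi_*\colon H^1(K,E[2])\to H^1(K,C')$ for the induced maps and $\partial\colon C'(K)\to H^1(K,C)$ for the connecting homomorphism, this sequence supplies the two facts I will lean on: $\ker i_* = \operatorname{im}\partial$ and $\operatorname{im} i_* = \ker\phi_*$.

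The first step is to record two compatibilities of the global descent maps, namely $i_*\circ\delta_\phi = \delta_2\circ\hatphi$ and $\phi_*\circ\delta_2 = \delta_{\hatphi}$, both checked by choosing compatible lifts of points: if $\phi R = Q$ then $\hatphi Q = 2R$, and $\phi$ carries a $2$-division point of $E$ to a $\hatphi$-division point of $E'$. These identities hold verbatim over every completion $K_v$. Since the local conditions $H^1_\phi(K_v,C)$, $H^1_f(K_v,E[2])$ and $H^1_{\hatphi}(K_v,C')$ are by definition the images of the local points under the corresponding local descent maps, the local squares immediately give $i_*\!\left(H^1_\phi(K_v,C)\right)\subseteq H^1_f(K_v,E[2])$ and $\phi_*\!\left(H^1_f(K_v,E[2])\right)\subseteq H^1_{\hatphi}(K_v,C')$. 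Hence $i_*$ and $\phi_*$ restrict to maps $\Sel_\phi(E/K)\to\Sel_2(E/K)\to\Sel_{\hatphi}(E'/K)$, and their composite vanishes because $\phi_*\circ i_*=0$ already on $H^1(K,C)$.

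Exactness at $\Sel_\phi$ together with injectivity of the first arrow is then bookkeeping. The connecting map $\partial$ agrees with the restriction of $\delta_\phi$ to the $K$-rational torsion, so $\operatorname{im}\partial\subseteq\Sel_\phi(E/K)$; consequently $\ker\!\left(i_*\vert_{\Sel_\phi}\right) = \Sel_\phi(E/K)\cap\ker i_* = \operatorname{im}\partial$. Reading $\ker\partial = \phi(E(K)[2])$ off the long exact sequence then identifies this kernel with $E'(K)[2]/\phi(E(K)[2])$ — here one invokes the standing hypotheses on the two-torsion of $E$ to match the source of $\partial$ with $E'(K)[2]$ — and shows the first arrow is injective.

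Exactness at $\Sel_2$ is the crux. Given $c\in\Sel_2(E/K)$ with $\phi_* c = 0$, the relation $\ker\phi_* = \operatorname{im} i_*$ furnishes some $b\in H^1(K,C)$ with $i_* b = c$, and the task is to show $b$ may be taken inside $\Sel_\phi(E/K)$, i.e.\ that $\res_v b\in H^1_\phi(K_v,C)$ for every place $v$. This reduces to the local identity $i_*^{-1}\!\left(H^1_f(K_v,E[2])\right) = H^1_\phi(K_v,C)$, equivalently that $i_*$ induces an injection $H^1(K_v,C)/H^1_\phi(K_v,C)\hookrightarrow H^1(K_v,E[2])/H^1_f(K_v,E[2])$. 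The inclusion $\supseteq$ is the local compatibility already established, and since $\ker i_*$ over $K_v$ equals $\operatorname{im}\partial_v\subseteq H^1_\phi(K_v,C)$ no global adjustment of $b$ will be needed once the identity is in hand. The reverse inclusion — that a local class whose image satisfies the $2$-descent condition must itself satisfy the $\phi$-descent condition — is the main obstacle, and I would settle it place-by-place, either by pushing a local point back through $i_*\circ\delta_\phi=\delta_2\circ\hatphi$ using the local cohomology exact sequence, or by a dimension count in which local Tate duality and the local Euler-characteristic formula pin down $\dim H^1_\phi(K_v,C)$, $\dim H^1_f(K_v,E[2])$ and $\dim H^1_{\hatphi}(K_v,C')$ and force the induced local map to be injective.
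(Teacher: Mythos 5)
The paper does not actually carry out this argument: its ``proof'' is a citation of Lemma 2 of \cite{FG} together with the assertion that the result is a well known diagram chase. Your proposal is that diagram chase, organized in the standard way around the long exact sequence of $0\to C\to E[2]\xrightarrow{\phi} C^\prime\to 0$ and the compatibilities $i_*\circ\delta_\phi=\delta_2\circ\hatphi$ and $\phi_*\circ\delta_2=\delta_{\hatphi}$, and its structure is correct. The one step you leave open --- the local identity $i_*^{-1}\bigl(H^1_f(K_v,E[2])\bigr)=H^1_\phi(K_v,C)$ --- does close by the first of your two suggested routes, and in essentially one line: if $x\in H^1(K_v,C)$ and $i_*x=\delta_2(P)$ for some $P\in E(K_v)$, then $\delta_{\hatphi}(P)=\phi_*\delta_2(P)=\phi_*i_*x=0$, so $P=\hatphi(Q)$ for some $Q\in E^\prime(K_v)$; then $i_*x=\delta_2(\hatphi Q)=i_*\delta_\phi(Q)$, so $x-\delta_\phi(Q)\in\ker i_*=\operatorname{im}\partial_v\subseteq H^1_\phi(K_v,C)$ and hence $x\in H^1_\phi(K_v,C)$. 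I would commit to that route and drop the second: a dimension count does not by itself give injectivity of the induced map on quotients unless you also invoke the precise local duality statements (that $H^1_\phi$ and $H^1_{\hatphi}$ are exact annihilators of one another and that $H^1_f$ is maximal isotropic), which is more machinery than the problem needs. Finally, the caution you raise about the first term is warranted: the group the chase actually produces is $C^\prime(K)/\phi(E(K)[2])=E^\prime(K)[\hatphi]/\phi(E(K)[2])$, and identifying it with the $E^\prime(K)[2]/\phi(E(K)[2])$ of the statement requires $E^\prime(K)[2]=\ker\hatphi$, i.e.\ $E^\prime(K)[2]\simeq\Zt$; in this paper that is supplied by the no-cyclic-4-isogeny hypothesis via Proposition \ref{charac}, not by the hypothesis on $E(K)[2]$ alone, so the theorem as literally stated should be read with that standing assumption in force.
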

\begin{proof}
This is a well known diagram chase. See Lemma 2 in \cite{FG} for example.
\end{proof}

The \textbf{Tamagawa ratio} $\T(E/E^\prime)$ defined as $\mathcal{T}(E/E^\prime) = \frac{ \big | \Sel_\phi(E/K)  \big |}{\big |\Sel_{\hat \phi}(E^\prime/K)\big |}$ gives a second relationship between the $\Ftwo$-dimensions of $\Sel_\phi(E/K)$ and $\Sel_\hatphi(E^\prime/K)$.


\begin{theorem}[Cassels]\label{prodform2}
The Tamagawa ratio $\mathcal{T}(E/E^\prime)$ is given by \begin{equation*}\mathcal{T}(E/E^\prime) = \prod_{v\text{ of } K}\frac{\left | H^1_\phi(K_v, C)\right |}{2}.\end{equation*}
\end{theorem}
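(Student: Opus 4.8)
The plan is to deduce this from global duality for finite Galois modules (the Greenberg--Wiles comparison formula, itself a consequence of the Poitou--Tate nine-term exact sequence), which is the modern incarnation of Cassels' original argument. The two ingredients are a clean identification of the Galois modules involved and the fact that the local conditions defining the two Selmer groups are exact orthogonal complements under local Tate duality.

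First I would record the module structure. Since $E(K)[2] \simeq \Zt$, the kernel $C = E(K)[2]$ of $\phi$ is the constant module $\Zt$. The kernel $C^\prime = \ker\hatphi = \phi(E[2])$ is its Cartier dual: the Weil pairing identifies $C^\prime \simeq \mathrm{Hom}(C, \mu_2) \simeq \mu_2$, and since $\mathrm{char}\,K \neq 2$ the module $\mu_2$ is again the constant module $\Zt$. In particular $H^0(K, C) = H^0(K, C^\prime) = \Zt$ and, for every place $v$, $H^0(K_v, C) = \Zt$, all of order $2$.

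Next I would invoke local Tate duality: the cup product $H^1(K_v, C) \times H^1(K_v, C^\prime) \to H^2(K_v, \mu_2) \simeq \Zt$ is a perfect pairing, and it is compatible with the Weil pairing so that the images $H^1_\phi(K_v, C)$ and $H^1_\hatphi(K_v, C^\prime)$ of the local connecting maps $\delta$ are exact orthogonal complements of one another. This is the local input underlying Cassels' formula and is where I expect the real content to lie; everything else is cohomological accounting. It also shows that at almost all $v$ (good reduction, $v \nmid 2\infty$) the condition $H^1_\phi(K_v, C)$ equals the unramified subgroup, of order $|H^0(K_v, C)| = 2$, so that all but finitely many factors in the claimed product equal $1$ and the product is well defined.

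With these in hand, the Greenberg--Wiles formula applied to $M = C$, $M^* = C^\prime$, and the local conditions $\{H^1_\phi(K_v, C)\}_v$ (whose orthogonal complements are $\{H^1_\hatphi(K_v, C^\prime)\}_v$) gives
\begin{equation*}
\frac{|\Sel_\phi(E/K)|}{|\Sel_\hatphi(E^\prime/K)|}
= \frac{|H^0(K, C)|}{|H^0(K, C^\prime)|}\prod_{v}\frac{|H^1_\phi(K_v, C)|}{|H^0(K_v, C)|}.
\end{equation*}
Finally I would substitute the orders computed above: the prefactor is $2/2 = 1$ and each denominator $|H^0(K_v, C)| = 2$, so the right-hand side collapses to $\prod_v |H^1_\phi(K_v, C)|/2$, which is exactly $\T(E/E^\prime)$. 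The only points requiring care are the archimedean and dyadic places, where $H^1_\phi(K_v, C)$ need not be unramified; but these contribute only finitely many factors and are handled uniformly by the duality formula, so no separate analysis is needed.
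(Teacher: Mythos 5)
Your argument is correct, but it takes a different route from the paper, which offers no self-contained proof at all: it simply assembles the statement from Theorem 1.1 and equations (1.22) and (3.4) of Cassels' original paper, where the ratio $|\Sel_\phi(E/K)|/|\Sel_\hatphi(E^\prime/K)|$ is computed via the Cassels--Tate machinery and then converted into the stated local product. You instead derive the formula from the Greenberg--Wiles product formula applied to $M=C$, $M^*=C^\prime$ with local conditions $\{H^1_\phi(K_v,C)\}_v$; your bookkeeping is right ($C\simeq\Zt$ constant, $C^\prime$ its Cartier dual $\mu_2\simeq\Zt$, all the $H^0$ terms of order $2$, the factor equal to $1$ at places of good reduction away from $2\infty$ because the local condition is unramified there), so the formula collapses to exactly $\prod_v |H^1_\phi(K_v,C)|/2$. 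This buys a cleaner, more modern derivation at the cost of importing two nontrivial black boxes: the Greenberg--Wiles formula itself and, more importantly, the fact that $H^1_\phi(K_v,C)$ and $H^1_\hatphi(K_v,C^\prime)$ are exact annihilators of one another under local Tate duality. You correctly flag the latter as the real content but do not prove it; it is a standard fact (compatibility of the local Tate pairing with the Weil pairing on $\ker\phi\times\ker\hatphi$, cf. Milne's \emph{Arithmetic Duality Theorems} or Schaefer's work on descent via isogeny), and it is also the crux of Cassels' own computation, so the two proofs are ultimately powered by the same duality input. Given that the paper itself only cites, your proposal is, if anything, the more informative of the two.
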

\begin{proof}
This is a combination of Theorem 1.1 and equations (1.22) and (3.4) in \cite{Cassels8}.
\end{proof}

Stepping back, we observe that if $\T(E/E^\prime) \ge 2^{r+2}$ , then $d_\phi(E/K) \ge r+2$, and therefore by Theorem \ref{gss}, $d_2(E/K) \ge r$. (If $E$ does not have a cyclic 4-isogeny defined over $K$ then we can in fact show that $\T(E/E^\prime) \ge 2^r$ implies that $d_2(E/K) \ge r$, but this is entirely unnecessary for our purposes.)

\section{Local Conditions at Twisted Places}\label{twistplace}

For the remainder of this paper, we will let $E$ be an elliptic curve with $E(K)[2] \simeq \Zt$ and let $\phi:E\rightarrow E^\prime$ be the isogeny with kernel $C=E(K)[2]$.


If $\p \nmid 2$ is a prime where $E$ has good reduction, then $H^1_\phi(K_\p, C)$ is a 1-dimensional $\F_2$-subspace of $H^1(K_\p, C)$ equal to the unramified local subgroup $H^1_u(K_\p, C)$. If such a $\p$ is ramified in the extension $F/K$ cut out by a character $\chi$, then the twisted curve $E^\chi$ will have bad reduction at $\p$. The following lemma addresses the size of $H^1_\phi(K_\p, C^\chi)$.

\begin{lemma}\label{localconds}
Suppose $\p \nmid 2$ is a prime where $E$ has good reduction and $\p$ is ramified in the extension $F/K$ cut out by $\chi$.

\begin{enumerate}[(i)]
\item If  $E(K_\p)[2] \simeq \Zt \simeq E^\prime(K_\p)[2]$, then $\dimF H^1_\phi(K_\p, C^\chi) = 1$.
\item If  $E(K_\p)[2] \simeq \Zt \times \Zt \simeq E^\prime(K_\p)[2]$, then $\dimF H^1_\phi(K_\p, C^\chi) = 1$.
\item If $E(K_\p)[2] \simeq \Zt$ and $E^\prime(K_\p)[2] \simeq \Zt \times \Zt$, then $\dimF H^1_\phi(K_\p, C^\chi) = 2$.
\item If $E(K_\p)[2] \simeq \Zt \times \Zt$ and $E^\prime(K_\p)[2] \simeq \Zt$, then $\dimF H^1_\phi(K_\p, C^\chi) = 0$.
\end{enumerate}
\end{lemma}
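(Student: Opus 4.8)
The plan is to compute the order of the local image $H^1_\phi(K_\p, C^\chi) = \delta\bigl(E^{\prime\chi}(K_\p)/\phi^\chi(E^\chi(K_\p))\bigr)$ directly, so that $\dimF H^1_\phi(K_\p, C^\chi) = \log_2 \bigl|E^{\prime\chi}(K_\p)/\phi^\chi(E^\chi(K_\p))\bigr|$; this index is an $\Ftwo$-vector space because $\hatphi^\chi\circ\phi^\chi = [2]$ forces $2E^{\prime\chi}(K_\p)\subseteq \phi^\chi(E^\chi(K_\p))$. First I would pass to the filtration by the kernel of reduction. Writing $E_1(K_\p)$ for that kernel, the hypothesis $\p\nmid 2$ makes $[2]$ an automorphism of the (pro-$\ell$, $\ell\ne 2$) formal group, and since any isogeny extends to the Néron models it maps formal groups to formal groups; hence $\phi^\chi$ restricts to an isomorphism $E_1^\chi(K_\p)\xrightarrow{\sim} E_1^{\prime\chi}(K_\p)$ (injective as $\ker\phi^\chi = C^\chi$ is $2$-torsion and $E_1^\chi(K_\p)$ has none, surjective as $\phi^\chi\hatphi^\chi=[2]$ is onto). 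Applying the snake lemma to $0\to E_1(K_\p)\to E(K_\p)\to G\to 0$ for both curves, with $G^\chi := E^\chi(K_\p)/E_1^\chi(K_\p)$ finite, identifies $E^{\prime\chi}(K_\p)/\phi^\chi(E^\chi(K_\p))$ with $\operatorname{coker}(\bar\phi^\chi\colon G^\chi\to G^{\prime\chi})$ and $E^\chi(K_\p)[\phi^\chi]$ with $\ker\bar\phi^\chi$, so that counting in finite groups gives
\[
\bigl|H^1_\phi(K_\p, C^\chi)\bigr| = \bigl|C^\chi(K_\p)\bigr|\cdot\frac{|G^{\prime\chi}|}{|G^\chi|}.
\]

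Next I would evaluate the two factors. As $C^\chi\simeq\Zt$ with trivial Galois action, $|C^\chi(K_\p)|=2$. For the ratio I only need the $2$-part. Each $|G^\chi|$ factors as $c_\p(E^\chi)\cdot|\widetilde{E^\chi}^{\mathrm{ns}}(k_\p)|$, the Tamagawa number times the order of the group of nonsingular points of the reduction. Because $E$ (hence $E^\prime$) has good reduction and $\p\nmid 2$, twisting by the ramified character $\chi$ produces additive, potentially good reduction of Kodaira type $I_0^*$, so $\widetilde{E^\chi}^{\mathrm{ns}}(k_\p)\cong\mathbb{G}_a(k_\p)$ has order $\N\p$, which is odd and contributes nothing to the $2$-part. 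Thus
\[
\dimF H^1_\phi(K_\p, C^\chi) = 1 + \ord_2 c_\p(E^{\prime\chi}) - \ord_2 c_\p(E^\chi).
\]

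The crux is the computation of $\ord_2 c_\p$ for a curve of type $I_0^*$, and this is the step I expect to require the most care. I would invoke Tate's algorithm: for type $I_0^*$ the component group is $\Zt\times\Zt$ and $c_\p = 1 + r$, where $r\in\{0,1,3\}$ is the number of $k_\p$-rational roots of the reduced cubic whose roots are the $x$-coordinates of the nonzero $2$-torsion. The key point is that good reduction of $E$ at $\p\nmid 2$ makes $E[2]$, and hence $E^\chi[2]\simeq E[2]$, unramified at $\p$; the reduction map is then a Galois-equivariant injection, so $k_\p$-rational roots correspond exactly to $K_\p$-rational nonzero $2$-torsion points. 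Therefore $r = |E^\chi(K_\p)[2]| - 1$ and $c_\p(E^\chi) = 2^{\dimF E^\chi(K_\p)[2]}$, giving $\ord_2 c_\p(E^\chi) = \dimF E^\chi(K_\p)[2] = \dimF E(K_\p)[2]$, and likewise $\ord_2 c_\p(E^{\prime\chi}) = \dimF E^\prime(K_\p)[2]$, where I again use that the $2$-torsion is unchanged under quadratic twist. Substituting yields
\[
\dimF H^1_\phi(K_\p, C^\chi) = 1 + \dimF E^\prime(K_\p)[2] - \dimF E(K_\p)[2],
\]
and reading off the four combinations of $\dimF E(K_\p)[2],\dimF E^\prime(K_\p)[2]\in\{1,2\}$ produces exactly (i)--(iv). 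As a cross-check that the constants are right, local Tate duality makes $H^1_\phi(K_\p, C^\chi)$ and $H^1_{\hatphi}(K_\p, C^{\prime\chi})$ exact annihilators, forcing their dimensions to sum to $\dimF H^1(K_\p, C^\chi) = 2$; the formula above, applied symmetrically to $\hatphi$, gives $1+(a-b)+(1+(b-a))=2$, consistent with all four cases.
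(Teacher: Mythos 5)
Your proof is correct, but it takes a genuinely different route from the paper's. The paper disposes of this lemma in two lines by citing Lemma 3.7 of \cite{K}, which asserts that $E^{\prime \chi}(K_\p)[2^\infty]/\phi(E^\chi(K_\p)[2^\infty]) = E^{\prime \chi}(K_\p)[2]/\phi(E^\chi(K_\p)[2])$; since the ambient quotient is $2$-primary and the $2$-torsion of a curve is unchanged as a Galois module under quadratic twist, each of the four cases is then a finite computation with $\phi$ restricted to $E(K_\p)[2]$. You instead compute $\bigl|E^{\prime\chi}(K_\p)/\phi^\chi(E^\chi(K_\p))\bigr|$ geometrically: the snake lemma against the formal-group filtration (valid since $\p\nmid 2$ makes $\phi^\chi$ an isomorphism on the pro-$p$ kernels of reduction) reduces everything to the ratio of Tamagawa numbers, and the identification $c_\p = 2^{\dimF E(K_\p)[2]}$ for the type $I_0^*$ twists comes out of Tate's algorithm together with injectivity of reduction on prime-to-$p$ torsion. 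Both arguments are sound and land on the same formula $\dimF H^1_\phi(K_\p, C^\chi) = 1 + \dimF E^\prime(K_\p)[2] - \dimF E(K_\p)[2]$. What your version buys is self-containedness --- it replaces the black-box citation with an explicit Néron-model computation, and the local-duality cross-check (that the $\phi$- and $\hatphi$-dimensions must sum to $\dimF H^1(K_\p, C^\chi)=2$) is a worthwhile sanity test the paper does not record. What it costs is machinery: you need the full strength of Tate's algorithm and the component group of $I_0^*$ fibers, where the paper's route needs only elementary group theory once the cited lemma is granted. One stylistic caution: your displayed identity $\hatphi^\chi\circ\phi^\chi=[2]$ is the composition on $E^\chi$, whereas the surjectivity claims for $E_1^{\prime\chi}(K_\p)$ and the containment $2E^{\prime\chi}(K_\p)\subseteq\phi^\chi(E^\chi(K_\p))$ actually use $\phi^\chi\circ\hatphi^\chi=[2]$ on $E^{\prime\chi}$; this is harmless but worth stating precisely.
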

\begin{proof}
From Lemma 3.7 in \cite{K}, we have 
\[
E^{\prime \chi}(K_\p)[2^\infty]/\phi( E^\chi(K_\p)[2^\infty]) = E^{\prime \chi}(K_\p)[2]/\phi( E^\chi(K_\p)[2]).
\]
All four results then follow immediately.
\end{proof}

\begin{remark}\label{remleg}
Suppose that $\Delta$ and $\Delta^\prime$ are discriminants of any integral models of $E$ and $E^\prime$ respectively. The condition that $E(K_\p)[2] \simeq \Zt$ ( resp. $E^\prime(K_\p)[2]\simeq \Zt$) is equivalent to the condition that $\Delta$ (resp. $\Delta^\prime$) is not a square in $K_\p$. Which case of Lemma \ref{localconds} we are is therefore determined by the Legendre symbols $\left ( \frac{\Delta}{\p} \right )$ and $\left(  \frac{\Delta^\prime}{\p}  \right )$.

\end{remark}
We use Theorem \ref{prodform2} and Lemma \ref{localconds} to relate $\ord_2 \T(E^\chi/E^{\prime \chi})$ to the value $g(\chi)$ of an additive function $g$ on $C(K)$ defined  as follows: For $\chi \in C(K)$ cutting out $F/K$, let \begin{equation}\label{gdef} g(\chi) = \sum_{\genfrac{}{}{0pt}{}{\p \text{ ramified in } F/K}{\p \nmid 2\Delta \infty}} \frac{ \left ( \frac{\Delta^\prime}{\p} \right ) - \left(  \frac{\Delta}{\p}  \right ) }{2}\end{equation} That is, $g(\chi)$ roughly counts the difference between the number of primes ramified in $F/K$ where condition $(iii)$ of Proposition \ref{localconds} is satisfied and the number of primes ramified in $F/K$ where condition $(iv)$ is satisfied. We then have the following:




\begin{proposition}\label{TT}
The order of $2$ in the Tamagawa ratio $\T(E^\chi/E^{\prime \chi})$ is given by \begin{equation*}\ord_2 \T(E^\chi/E^{\prime \chi}) = g(\chi) + \sum_{v |2\Delta\infty} \left ( \dimF H^1_\phi(K_v, C^\chi)  - 1 \right ).\end{equation*}
\end{proposition}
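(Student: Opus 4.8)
The plan is to read the claimed identity off Cassels's product formula (Theorem \ref{prodform2}) applied to the twisted pair $(E^\chi, E^{\prime\chi})$, converting a product over all places into a sum of local contributions, and then to show that the contribution at each prime $\p\nmid 2\Delta\infty$ is exactly the $\p$-summand appearing in the definition \eqref{gdef} of $g$. First I would apply Theorem \ref{prodform2} to obtain
\[
\T(E^\chi/E^{\prime\chi}) = \prod_{v \text{ of } K} \frac{|H^1_\phi(K_v, C^\chi)|}{2}.
\]
Since each $H^1_\phi(K_v, C^\chi)$ is an $\F_2$-vector space, the $v$-th factor equals $2^{\dimF H^1_\phi(K_v,C^\chi) - 1}$, and all but finitely many factors equal $1$, so taking $\ord_2$ of both sides gives
\[
\ord_2 \T(E^\chi/E^{\prime\chi}) = \sum_{v \text{ of } K} \left( \dimF H^1_\phi(K_v, C^\chi) - 1 \right).
\]

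Next I would split this sum according to whether $v \mid 2\Delta\infty$ or not. The places dividing $2\Delta\infty$ already constitute the second sum in the statement, so it remains to identify $\sum_{\p \nmid 2\Delta\infty}\left(\dimF H^1_\phi(K_\p, C^\chi) - 1\right)$ with $g(\chi)$. For such a $\p$ there are two cases. If $\chi$ is unramified at $\p$, then $E^\chi$ still has good reduction at $\p$, so, as recalled at the start of Section \ref{twistplace}, $H^1_\phi(K_\p, C^\chi)$ is the $1$-dimensional unramified subgroup and the summand vanishes; this is consistent with the fact that the sum defining $g$ runs only over ramified primes. If instead $\p$ is ramified in $F/K$, I would invoke Lemma \ref{localconds}, whose four cases are distinguished precisely by the isomorphism types of $E(K_\p)[2]$ and $E^\prime(K_\p)[2]$. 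By Remark \ref{remleg} these types are detected by the Legendre symbols $\left(\frac{\Delta}{\p}\right)$ and $\left(\frac{\Delta^\prime}{\p}\right)$, with value $-1$ corresponding to $\Zt$ and $+1$ to $\ZtZt$.

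Checking the four cases then shows that in every case
\[
\dimF H^1_\phi(K_\p, C^\chi) - 1 = \frac{1}{2}\left(\left(\tfrac{\Delta^\prime}{\p}\right) - \left(\tfrac{\Delta}{\p}\right)\right):
\]
cases (i) and (ii) give $0$, case (iii) gives $1$, and case (iv) gives $-1$, matching the Legendre-symbol expression in each instance. Summing over all ramified $\p \nmid 2\Delta\infty$ recovers exactly $g(\chi)$, and adding the contribution of the places over $2\Delta\infty$ yields the proposition. This is essentially a bookkeeping argument rather than one with a genuine obstacle; the only points demanding care are the case-by-case verification that the local dimension drop agrees with the symmetric quantity $\tfrac12\left(\left(\tfrac{\Delta^\prime}{\p}\right) - \left(\tfrac{\Delta}{\p}\right)\right)$ via the correct pairing of Lemma \ref{localconds} with the sign data of Remark \ref{remleg}, and the observation that a character unramified at $\p$ preserves good reduction there, so that good-reduction primes outside the ramified locus contribute nothing.
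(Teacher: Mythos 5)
Your proposal is correct and follows essentially the same route as the paper: apply Cassels's product formula to the twisted pair, note that places of good reduction where $\chi$ is unramified contribute trivially, and use Lemma \ref{localconds} together with Remark \ref{remleg} to identify the contribution of each ramified prime $\p \nmid 2\Delta\infty$ with the summand $\frac{1}{2}\left(\left(\frac{\Delta^\prime}{\p}\right)-\left(\frac{\Delta}{\p}\right)\right)$ in the definition of $g$. The explicit four-case verification you carry out is exactly the bookkeeping the paper leaves to the reader.
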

\begin{proof}
By Thereom \ref{prodform2}, $\ord_2 \T(E^\chi/E^{\prime \chi})$ is given by \begin{equation*}\displaystyle{ \ord_2 \T(E^\chi/E^{\prime \chi}) = \sum_{v |2\Delta\Delta_{F/K}\infty} \left ( \dimF H^1_\phi(K_v, C^d)  - 1 \right ),}\end{equation*} where $\Delta_{F/K}$ is the relative discriminant of the extension $F/K$ cut out by $\chi$. By Remark \ref{remleg}, Lemma \ref{localconds} gives us that \begin{equation*}\dimF H^1_\phi(K_\p, C^\chi) - 1 =  \frac{ \left ( \frac{\Delta^\prime}{\p} \right ) - \left(  \frac{\Delta}{\p}  \right ) }{2}\end{equation*} for places $\p \mid \Delta_{F/K}$ with $\p\nmid 2\Delta\infty$ and the result follows.
\end{proof}

\section{The Erd{\H o}s-Kac Theorem For Quadratic Characters}\label{pfEKthm}

Because the sum $$ \sum_{v |2\Delta\infty} \left ( \dimF H^1_\phi(K_v, C^\chi)  - 1 \right )$$ can be bounded uniformly for a fixed elliptic curve, Proposition \ref{TT} suggests that we should study the distribution of the additive function $g(\chi)$ in order to understand the distribution of $\T(E^\chi/E^{\prime \chi})$ as $\chi$ varies.

When $f:\mathbb{N} \rightarrow \C$ is an additive function on the integers, then under mild hypotheses, the classical Erd{\H o}s-Kac Theorem tells us that the distribution of $f$ on natural numbers less than $X$ approaches a normal distribution with mean $\mu(X)$ and variance $\sigma^2(X)$ as $X \rightarrow \infty$, where $\mu(X)$ and $\sigma(X)$ are the rational analogues of $\mu_f(X)$ and $\sigma_f(X)$ defined in the introduction. Theorem \ref{EKthm} is the statement that the same type of result holds for additive functions on quadratic characters of a number field.

We begin by observing that if $\chi_{d_1}=\chi_{d_2}$, then, if $(d_1)=\mathfrak{a}\mathfrak{b}_1^2$ and $(d_2)=\mathfrak{a}\mathfrak{b}_2^2$, $\mathfrak{b}_1$ and $\mathfrak{b}_2$ lie in the same ideal class of $\mathcal{O}_K$.  Conversely, if $\mathfrak{b}_1$ and $\mathfrak{b}_2$ lie in the same class, then $\chi_{d_1}=\chi_{\varepsilon d_2}$ for some $\varepsilon\in\mathcal{O}_K^\times$.  Thus, we see that elements of $C(K, X)$ correspond to triples $(\mathfrak{b},\mathfrak{a},\varepsilon)$ with $\mathfrak{b}$ a representative of its ideal class of minimal norm, $\mathfrak{a}$ a squarefree ideal of norm $\N\mathfrak{a} <X/\N\mathfrak{b}^2$ such that $\mathfrak{a}\mathfrak{b}^2$ is principal, and $\varepsilon$ an element of $\mathcal{O}_K^\times/(\mathcal{O}_K^\times)^2$.  From this discussion, it is apparent, for a prime ideal $\mathfrak{p}$ and a fixed choice $\mathfrak{b}_0$ of $\mathfrak{b}$, that
\begin{eqnarray*}
\lim_{X\to\infty} \frac{|\left \{\chi\in C(K,X) \leftrightarrow (\mathfrak{b}_0,\mathfrak{a},\varepsilon) : \mathfrak{p}\!\mid\!\mathfrak{a} \right \} |}{|C(K,X)|} 
	&=& \mathrm{Pr}(\mathfrak{p}\!\mid\!\mathfrak{a} : \mathfrak{a} \text{ is squarefree}, \mathfrak{a}\mathfrak{b}_0^2 \text{ is principal}) \\
	&=& \frac{1}{\N\mathfrak{p}+1},
\end{eqnarray*}
whence, ranging over all $\chi$, the probablity that $\mathfrak{p}\mid\mathfrak{a}$ is $\frac{1}{\N\mathfrak{p}+1}$.

Thus, treating the events $\mathfrak{p}_1\mid\mathfrak{a}$ and $\mathfrak{p}_2\mid\mathfrak{a}$ as independent, we might predict that 
\[
\frac{1}{|C(K,X)|} \sum_{\chi\in C(K,X)} f(\chi) \approx \tilde{\mu}_f(X),
\]
where
\begin{eqnarray*}
\tilde{\mu}_f(X) 
	&:=& \sum_{\N\mathfrak{p} < X} \frac{f(\mathfrak{p})}{\N\mathfrak{p}+1} \\
	&=& \mu_f(X) + O(1).
\end{eqnarray*}
In fact, this prediction is true, which we establish using the method of moments, following the blueprint of Granville and Soundararajan \cite{GS}.  This requires the following technical result about the function $g_\mathfrak{p}(\chi)$, defined to be
\[
g_\mathfrak{p}(\chi) := \left\{ \begin{array}{ll} 
f(\mathfrak{p})\left(1-\frac{1}{\N\mathfrak{p}+1}\right) & \text{if } \mathfrak{p}\mid\mathfrak{a}, \text{ and} \\ 
f(\mathfrak{p})\left(- \frac{1}{\N\mathfrak{p}+1}\right) & \text{if } \mathfrak{p}\nmid\mathfrak{a}.
\end{array}\right.
\]
\begin{theorem}
\label{thm:additive}
With notation as above, uniformly for $k\leq \sigma_f(z)^{2/3}$, we have that
\[
\frac{1}{|C(K,X)|}\sum_{\chi\in C(K,X)} \left(\sum_{\N\mathfrak{p}<z} g_\mathfrak{p}(\chi)\right)^k = c_k \sigma_f(z)^k\left(1+O\left(\frac{k^3}{\sigma_f(z)^2}\right)\right)+O\left(X^{\lambda-1} 3^k \pi_K(z)^k\right)
\]
if $k$ is even, and
\[
\frac{1}{|C(K,X)|}\sum_{\chi\in C(K,X)} \left(\sum_{\N\mathfrak{p}<z} g_\mathfrak{p}(\chi)\right)^k \ll c_k \sigma_f(z)^{k-1} k^{3/2} + X^{\lambda-1} 3^k \pi_K(z)^k
\]
if $k$ is odd.  Here, $c_k=\Gamma(k+1)/2^{k/2}\Gamma(\frac{k}{2}+1)$ and $\lambda<1$ depends only on the degree of $K$.
\end{theorem}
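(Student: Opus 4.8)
The plan is to prove Theorem~\ref{thm:additive} by the method of moments, transporting to squarefree ideals in a fixed ideal class the blueprint of Granville and Soundararajan \cite{GS}. Using the correspondence between $\chi \in C(K,X)$ and triples $(\mathfrak{b},\mathfrak{a},\varepsilon)$ recorded above, averaging a function of $\chi$ over $C(K,X)$ reduces, after fixing $\mathfrak{b}$ and $\varepsilon$, to averaging over squarefree ideals $\mathfrak{a}$ with $\mathfrak{a}\mathfrak{b}^2$ principal and $\N\mathfrak{a} < X/\N\mathfrak{b}^2$. The first and analytically most delicate ingredient is a counting estimate: for a squarefree ideal $\mathfrak{m} = \p_1\cdots\p_j$ I would show
\[
\frac{|\{\chi\in C(K,X) : \mathfrak{m}\mid\mathfrak{a}\}|}{|C(K,X)|} = \prod_{i=1}^j \frac{1}{\N\p_i + 1} + O\!\left(X^{\lambda-1}\right),
\]
with power-saving exponent $\lambda<1$ depending only on $[K:\Q]$. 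This comes from the asymptotic count of squarefree ideals of bounded norm lying in a prescribed ideal class and divisible by $\mathfrak{m}$, the error term issuing from a Landau-type estimate for the relevant Dedekind zeta function; this is the origin of the factor $X^{\lambda-1}$ in the theorem.

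With this in hand I would introduce the idealized probabilistic model in which the events $\p\mid\mathfrak{a}$ are independent with probability $1/(\N\p+1)$. Under this model the $g_\p$ are independent and mean-zero, with $\mathbb{E}[g_\p^2] = f(\p)^2\,\N\p/(\N\p+1)^2 \sim f(\p)^2/\N\p$, so that $\sum_{\N\p<z}\mathbb{E}[g_\p^2] \sim \sigma_f(z)^2$; moreover $\mathbb{E}[g_\p]=0$ and $|\mathbb{E}[g_\p^a]| \ll f(\p)^2/\N\p$ for every $a\geq 2$, since $f(\p)^a\leq f(\p)^2$. Expanding the $k$-th power of $\sum_{\N\p<z} g_\p$ as a sum over ordered $k$-tuples and grouping by the distinct primes and their multiplicities $a_1,\dots,a_s$, any prime of multiplicity one kills the term in the model, so only configurations with all $a_i\geq 2$ survive; such a configuration contributes $\ll \sigma_f(z)^{2s}$, and since $\sum a_i = k$ with each $a_i\geq 2$ we have $s\leq k/2$, with equality precisely when every $a_i=2$. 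The all-pairs configurations thus give the main term, weighted by the number $k!/(2^{k/2}(k/2)!)=c_k$ of pairings of the $k$ positions, producing $c_k\,\sigma_f(z)^k$.

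It then remains to bound the defective configurations and to return from the model to the true average. For even $k$, any departure from the all-pairs case reduces $s$ by at least one and hence loses a factor $\sigma_f(z)^2$; carrying out the multinomial bookkeeping for where the defect sits among the $k$ positions yields a total contribution $O(c_k\,\sigma_f(z)^{k-2}k^3)$, i.e.\ the relative error $O(k^3/\sigma_f(z)^2)$, and the hypothesis $k\leq \sigma_f(z)^{2/3}$ is exactly what forces this to be $O(1)$. For odd $k$ there is no perfect pairing; the extremal configuration is $(k-3)/2$ pairs together with one triple, giving $s=(k-1)/2$ and hence $\sigma_f(z)^{2s}=\sigma_f(z)^{k-1}$, which with the combinatorial count yields the stated bound $\ll c_k\,\sigma_f(z)^{k-1}k^{3/2}$. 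Finally I would collect the errors from replacing each tuple-average by its value in the model: expressing $g_\p^a$ through the indicators $1_{\p\mid\mathfrak{a}}$ and $1_{\p\nmid\mathfrak{a}}=1-1_{\p\mid\mathfrak{a}}$ and applying the counting estimate to each resulting divisibility condition, every one of the at most $\pi_K(z)^k$ tuples contributes an error $O(X^{\lambda-1})$ times a bounded combinatorial factor at most $3^k$, which assembles into the displayed term $O(X^{\lambda-1}3^k\pi_K(z)^k)$.

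The hardest part will be the uniformity of the counting estimate together with the sharp combinatorial control of the defective configurations: one must ensure that the power-saving in the ideal count is genuinely uniform as $\mathfrak{m}$ ranges over products of primes of norm up to $z$, and that the multinomial factors counting configurations with a prime of multiplicity at least three are bounded tightly enough to give the clean relative error $k^3/\sigma_f(z)^2$ rather than something larger. Once these two points are secured, the remainder is a careful but routine transcription of the Granville--Soundararajan computation into the present setting.
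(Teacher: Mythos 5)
Your proposal is correct and follows essentially the same route as the paper: both expand the $k$-th moment over $k$-tuples of primes, reduce to a power-saving count of squarefree ideals in a fixed class with prescribed gcd (your divisibility indicators and the paper's Lemma~\ref{lem:dist} are equivalent via inclusion--exclusion, whence the $3^{\omega(\mathfrak{q})}\le 3^k$ factor), observe that only square-full configurations survive (your model expectation $\prod\mathbb{E}[g_\p^{\alpha_i}]$ is exactly the paper's multiplicative function $G(\mathfrak{q})$), extract the main term $c_k\sigma_f(z)^k$ from the all-pairs configurations, and defer the fine error bookkeeping to Granville--Soundararajan. No substantive difference.
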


The proof of Theorem \ref{thm:additive} relies upon a result about the distribution of squarefree ideals.  To this end, given ideals $\mathfrak{c}$ and $\mathfrak{q}$ and squarefree $\mathfrak{d}\mid\mathfrak{q}$, define $N^{\mathrm{sf}}(X;\mathfrak{c},\mathfrak{q},\mathfrak{d})$ to be the number of squarefree ideals $\mathfrak{a}$ of norm up to $X$ in the same class as $\mathfrak{c}$ and such that $(\mathfrak{a},\mathfrak{q})=\mathfrak{d}$.  We then have:

\begin{lemma}
\label{lem:dist}
With notation as above, we have that
\[
N^{\mathrm{sf}}(X;\mathfrak{c},\mathfrak{q},\mathfrak{d}) = \frac{1}{|\mathrm{Cl}(K)|}\frac{\mathrm{res}_{s=1}\zeta_K(s)}{\zeta_K(2)} \phi(\mathfrak{q},\mathfrak{d}) X + O(X^\lambda 3^{\omega(\mathfrak{q})}),
\]
where $\lambda=\frac{\deg K-1}{\deg K+1}$ if $\deg K \geq 3$ and $\lambda=1/2$ otherwise, $\omega(\mathfrak{q})$ denotes the number of distinct primes dividing $\mathfrak{q}$, and
\[
\phi(\mathfrak{q},\mathfrak{d}) = \prod_{\mathfrak{p}\mid\mathfrak{d}} \frac{1}{\N\mathfrak{p}+1} \prod_{\mathfrak{p}\mid\mathfrak{q},\mathfrak{p}\nmid\mathfrak{d}} \frac{\N\mathfrak{p}}{\N\mathfrak{p}+1}.
\]
\end{lemma}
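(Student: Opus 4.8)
The plan is to reduce the count to the classical problem of counting integral ideals of bounded norm in a fixed ideal class, and then to sieve for the squarefree and coprimality conditions. First I would strip off the prescribed common factor $\mathfrak{d}$: since $\mathfrak{d}$ is squarefree and $\mathfrak{d}\mid\mathfrak{q}$, any ideal $\mathfrak{a}$ with $(\mathfrak{a},\mathfrak{q})=\mathfrak{d}$ factors uniquely as $\mathfrak{a}=\mathfrak{d}\mathfrak{b}$ with $\mathfrak{b}$ squarefree and $(\mathfrak{b},\mathfrak{q})=1$, and $\mathfrak{a}$ lies in the class of $\mathfrak{c}$ exactly when $\mathfrak{b}$ lies in the class of $\mathfrak{c}\mathfrak{d}^{-1}$. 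Thus $N^{\mathrm{sf}}(X;\mathfrak{c},\mathfrak{q},\mathfrak{d})$ equals the number of squarefree $\mathfrak{b}$ of norm at most $X/\N\mathfrak{d}$, coprime to $\mathfrak{q}$, in the class of $\mathfrak{c}\mathfrak{d}^{-1}$.

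Next I would detect both side conditions by M\"obius inversion. Writing the squarefree indicator as $\sum_{\mathfrak{e}^2\mid\mathfrak{b}}\mu(\mathfrak{e})$ (with $\mathfrak{e}$ then necessarily coprime to $\mathfrak{q}$) and the coprimality indicator as $\sum_{\mathfrak{f}\mid(\mathfrak{b},\mathfrak{q})}\mu(\mathfrak{f})$ reduces everything to inner quantities of the form $\#\{\mathfrak{b}'':[\mathfrak{b}'']=\mathfrak{h}',\ \N\mathfrak{b}''\le Z\}$, the number of ideals of bounded norm in a single class. Here I would invoke the Landau-type asymptotic
\[
\#\{\mathfrak{b}'':[\mathfrak{b}'']=\mathfrak{h}',\ \N\mathfrak{b}''\le Z\} = \frac{\mathrm{res}_{s=1}\zeta_K(s)}{|\mathrm{Cl}(K)|}\,Z + O\!\left(Z^{(\deg K-1)/(\deg K+1)}\right),
\]
valid uniformly in the class. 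Assembling the main terms, the sum $\sum_{(\mathfrak{e},\mathfrak{q})=1}\mu(\mathfrak{e})\N\mathfrak{e}^{-2}=\zeta_K(2)^{-1}\prod_{\mathfrak{p}\mid\mathfrak{q}}(1-\N\mathfrak{p}^{-2})^{-1}$ supplies the factor $\zeta_K(2)^{-1}$, the coprimality inversion supplies $\prod_{\mathfrak{p}\mid\mathfrak{q}}(1-\N\mathfrak{p}^{-1})$, and the factor $\N\mathfrak{d}^{-1}$ from the rescaling combines with these; a short Euler-factor computation shows the product equals exactly $\phi(\mathfrak{q},\mathfrak{d})$, yielding the claimed main term.

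For the error term I would truncate the $\mathfrak{e}$-sum at $\N\mathfrak{e}\le X^{1/2}$ (beyond which the inner count vanishes), bound the tail of the completed main-term sum by $O(X^{1/2})$, and estimate the accumulated Landau errors by $\ll 3^{\omega(\mathfrak{q})}X^{\lambda_0}\sum_{\N\mathfrak{e}\le X^{1/2}}\N\mathfrak{e}^{-2\lambda_0}$, where $\lambda_0=(\deg K-1)/(\deg K+1)$. When $\deg K\ge 4$ this series converges and the Landau exponent $\lambda_0$ dominates; when $\deg K\le 2$ the series grows and balances the completing error to produce the exponent $1/2$; and $\deg K=3$ is the borderline case where both mechanisms give $1/2$, up to a logarithm absorbed into the implied constant. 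In all cases the exponent is $\lambda=\max\{\lambda_0,\,1/2\}$, which is precisely the value asserted in the lemma, and the factor $3^{\omega(\mathfrak{q})}$ is a uniform bound on the combined local bookkeeping at the primes dividing $\mathfrak{q}$ arising from the two nested M\"obius sums.

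The step I expect to be the main obstacle is keeping the error uniform in $\mathfrak{q}$ while retaining the sharp power of $X$: the squarefree sieve and the coprimality inclusion--exclusion must be disentangled at the primes of $\mathfrak{q}$ without letting the number of error contributions grow faster than $3^{\omega(\mathfrak{q})}$, and one must check that the Landau estimate applies with an implied constant uniform in both the class and the modulus. The analytic input---the ideal-counting asymptotic with exponent $(\deg K-1)/(\deg K+1)$---is classical and can be cited, so the real work lies in the uniform organization of the two sieves rather than in any new estimate.
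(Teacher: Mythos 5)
Your proposal is correct and follows essentially the same route as the paper, whose entire proof is the remark that the lemma "follows from elementary considerations and the classical estimate" for counting ideals of bounded norm in a fixed class with error $O(Z^{(\deg K-1)/(\deg K+1)})$ --- precisely the reduction (factor out $\mathfrak{d}$, M\"obius-sieve for squarefreeness and coprimality to $\mathfrak{q}$, apply Landau uniformly in the class) that you carry out, and your Euler-factor identification of the main term with $\phi(\mathfrak{q},\mathfrak{d})$ checks out. The only loose end, which you already flag, is the extra logarithm in the borderline case $\deg K=3$ (where $\sum_{\N\mathfrak{e}\le X^{1/2}}\N\mathfrak{e}^{-2\lambda_0}$ is a divergent harmonic-type sum); this cannot literally be "absorbed into the implied constant," but it is harmless for every use of the lemma in the paper, where any exponent $\lambda<1$ suffices.
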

\begin{proof}
This follows from elementary considerations and the classical estimate
\[
\sum_{\begin{subarray}{c} \N\mathfrak{a} < X \\ \mathfrak{a}\mathfrak{c}^{-1} \text{ prin.} \end{subarray}} 1 = \frac{1}{|\mathrm{Cl}(K)|}\mathrm{res}_{s=1}\zeta_K(s)\cdot X + O(X^{\frac{\deg K-1}{\deg K+1}}).
\]
\end{proof}

\begin{proof}[Proof of Theorem \ref{thm:additive}]
For any ideal $\mathfrak{q}$, define $g_\mathfrak{q}(\chi):=\prod_{\mathfrak{p}^\alpha \mid\mid \mathfrak{q}}g_\mathfrak{p}(\chi)^\alpha$.  We then have that
\begin{eqnarray*}
\sum_{\chi\in C(K,X)} \left(\sum_{\N\mathfrak{p}<z} g_\mathfrak{p}(\chi)\right)^k
	&=& \left|\mathcal{O}_K^\times/(\mathcal{O}_K^\times)^2\right| \cdot \sum_{\mathfrak{b}} \sideset{}{^\prime}\sum_{\begin{subarray}{c}\N\mathfrak{a}<\frac{X}{\N\mathfrak{b}^2} \\ \mathfrak{a}\mathfrak{b}^2\, \text{prin.} \end{subarray}} \left(\sum_{\N\mathfrak{p}<z} g_\mathfrak{p}(\mathfrak{a})\right)^k \\
	&=& \left|\mathcal{O}_K^\times/(\mathcal{O}_K^\times)^2\right| \cdot \sum_{\mathfrak{b}} \sum_{\N\mathfrak{p}_1,\dots,\N\mathfrak{p}_k < z} \sideset{}{^\prime}\sum_{\begin{subarray}{c}\N\mathfrak{a}<\frac{X}{\N\mathfrak{b}^2} \\ \mathfrak{a}\mathfrak{b}^2\, \text{principal} \end{subarray}} g_{\mathfrak{p}_1\dots\mathfrak{p}_k}(\mathfrak{a}),
\end{eqnarray*}
where, as expected, the summation over $\mathfrak{b}$ is taken to be over representatives of minimal norm for each ideal class and the prime on the summation over $\mathfrak{a}$ indicates it is to be taken over squarefree ideals.  Given an ideal $\mathfrak{c}$, we now consider for any $\mathfrak{q}$ the more general summation
\begin{eqnarray*}
\sideset{}{^\prime}\sum_{\begin{subarray}{c} \N\mathfrak{a}<Y \\ \mathfrak{a}\mathfrak{c}^{-1} \,\text{principal} \end{subarray}} g_\mathfrak{q}(\mathfrak{a}) 
	&=& \sum_{\mathfrak{d}\mid \sqrt{\mathfrak{q}}} g_\mathfrak{q}(\mathfrak{d}) N^{\mathrm{sf}}(Y;\mathfrak{c},\mathfrak{q},\mathfrak{d}) \\
	&=& \frac{Y}{|\mathrm{Cl}(K)|} \frac{\mathrm{res}_{s=1}\zeta_K(s)}{\zeta_K(2)} \sum_{\mathfrak{d}\mid \sqrt{\mathfrak{q}}} g_\mathfrak{q}(\mathfrak{d})\phi(\mathfrak{q},\mathfrak{d}) + O\left(Y^{\lambda}3^{\omega(\mathfrak{q})}\sum_{\mathfrak{d}\mid \sqrt{\mathfrak{q}}} |g_\mathfrak{q}(\mathfrak{d})| \right), \\
	&=:& \frac{Y}{|\mathrm{Cl}(K)|} \frac{\mathrm{res}_{s=1}\zeta_K(s)}{\zeta_K(2)} G(\mathfrak{q}) + O\left(Y^\lambda 3^{\omega(\mathfrak{q})}\right),
\end{eqnarray*}
say, where $\sqrt{\mathfrak{q}}=\prod_{\mathfrak{p}\mid\mathfrak{q}}\mathfrak{p}$.  We note that $G(\mathfrak{q})$ is multiplicative, and is given by
\[
G(\mathfrak{q}) = \prod_{\mathfrak{p}^\alpha\mid\mid \mathfrak{q}} \frac{f(\mathfrak{p})^\alpha}{\N\mathfrak{p}+1} \left( \left(1-\frac{1}{\N\mathfrak{p}+1}\right)^\alpha + \N\mathfrak{p} \cdot \left(-\frac{1}{\N\mathfrak{p}+1}\right)^\alpha \right).
\]
Thus, $G(\mathfrak{q})=0$ unless each $\alpha$ is at least two, i.e. $\mathfrak{q}$ is square-full.

Returning to the original problem, we find that
\begin{eqnarray*}
\sum_{\chi\in C(K,X)} \left(\sum_{\N\mathfrak{p}<z} g_\mathfrak{p}(\chi)\right)^k
	&=& c(K)\cdot X \sum_{\N\mathfrak{p}_1,\dots,\N\mathfrak{p}_k<z}G(\mathfrak{p}_1\dots\mathfrak{p}_k) + O\left(X^{\lambda}3^k\pi_K(z)^k\right),
\end{eqnarray*}
where $\pi_K(z):=\#\{\mathfrak{p} : \N\mathfrak{p}<z\}$ and
\[
c(K):=\left| \mathcal{O}_K^\times/(\mathcal{O}_K^\times)^2 \right| \frac{1}{|\mathrm{Cl}(K)|} \frac{\mathrm{res}_{s=1}\zeta_K(s)}{\zeta_K(2)} \sum_{\mathfrak{b}} \frac{1}{\N\mathfrak{b}^2}.
\]
Noting that the above discussion also proves that $|C(K, X)| = c(K)\cdot X + O(X^{\lambda})$, the goal is to estimate the summation over $\mathfrak{p}_1,\dots,\mathfrak{p}_k$.  Since $G(\mathfrak{q})=0$ unless $\mathfrak{q}$ is square-full, we have that
\[
\sum_{\N\mathfrak{p}_1,\dots,\N\mathfrak{p}_k<z}G(\mathfrak{p}_1\dots\mathfrak{p}_k) = \sum_{s\leq k/2} \sum_{\begin{subarray}{c} \alpha_1+\dots+\alpha_s=k, \\ \text{each }\alpha_i\geq 2 \end{subarray}} \frac{k!}{\alpha_1!\dots\alpha_s!}  \sum_{\begin{subarray}{c} \mathfrak{p}_1 < \dots < \mathfrak{p}_s \\ \N\mathfrak{p}_s < z \end{subarray}} G(\mathfrak{p}_1^{\alpha_1}\dots\mathfrak{p}_s^{\alpha_s}), 
\]
where $\mathfrak{p}<\mathfrak{p}^\prime$ is determined by a norm-compatible linear ordering on the prime ideals of $\mathcal{O}_K$.  We note that, since $G(\mathfrak{p}^\alpha) \leq \frac{f(\mathfrak{p})^2}{N\mathfrak{p}}$, the inner summation contributes no more than $O(\sigma_f(z)^{2s})$, which will be an error term unless $s=\frac{k}{2}$; the dependence of this error on $k$ can be sussed out exactly as in Granville and Soundararajan's work.  In fact, the handling of the main term, arising when $k$ is even and $s=\frac{k}{2}$, is also essentially the same.  In particular, the inner summation is equal to
\begin{eqnarray*}
\frac{1}{(\frac{k}{2})!} \sum_{\begin{subarray}{c} \N\mathfrak{p}_1,\dots,\N\mathfrak{p}_{\frac{k}{2}}<z \\ \mathrm{distinct} \end{subarray}} G(\mathfrak{p}_1^2\dots\mathfrak{p}_{\frac{k}{2}}^2) 
	&=& \frac{1}{(\frac{k}{2})!} \left(\sum_{\N\mathfrak{p}<z} G(\mathfrak{p}^2) + O(\log\log k) \right)^{k/2} \\
	&=& \frac{1}{(\frac{k}{2})!} \left(\sigma_f(z)^2+O(\log\log k)\right)^{k/2}.
\end{eqnarray*}
This yields Theorem \ref{thm:additive}.
\end{proof}

We are now ready to prove Theorem \ref{EKthm}.

\begin{proof}[Proof of Theorem \ref{EKthm}]
Recall that we wish to show that the quantity
\[
\frac{f(\chi)-\mu_f(X)}{\sigma_f(X)}, \,\, \chi\in C(K,X),
\]
is normally distributed as $X\to\infty$.  As remarked above, we will do so using the method of moments.  In particular, we have that
\begin{eqnarray*}
\frac{1}{|C(K,X)|} \sum_{\chi\in C(K,X)}\!\!\!\!\!\! \left(f(\chi)-\mu_f(X)\right)^k\!\!\!\!
	&=& \!\!\!\!\frac{1}{|C(K,X)|} \sum_{\chi\in C(K,X)}\!\! \left(\sum_{\mathfrak{p}\mid\mathfrak{a}}f(\mathfrak{p})-\sum_{\N\mathfrak{p}<X} \frac{f(\mathfrak{p})}{\N\mathfrak{p}+1} + O(1) \right)^k \\
	&=& \!\!\!\! \frac{1}{|C(K,X)|} \sum_{\chi\in C(K,X)}\!\! \left(\sum_{\N\mathfrak{p}<z} g_\mathfrak{p}(\mathfrak{a}) + O\left(\frac{\log X}{\log z} \right) \right)^k.
\end{eqnarray*}
Considering the error term in Theorem \ref{thm:additive}, we take $z=X^{\frac{1-\lambda}{k}}$.  With this choice, the inner summation becomes
\[\left(\sum_{\N\mathfrak{p}<z} g_\mathfrak{p}(\mathfrak{a}) \right)^k+O\left(\sum_{j=0}^{k-1} k^j \left|\sum_{\N\mathfrak{p}<z} g_\mathfrak{p}(\mathfrak{a}) \right|^{k-j}\right),\]
hence Theorem \ref{thm:additive}, the Cauchy-Schwarz inequality, and the fact that $\sigma_f(z)=\sigma_f(X)+O(\log k)$ yield that
\[
\frac{1}{|C(K,X)|} \sum_{\chi\in C(K,X)} \left(f(\chi)-\mu_f(X)\right)^k = c_k \sigma_f(X)^k\left(1 + O\left( \frac{k^{3/2}}{\sigma_f(X)}\right)\right)
\]
if $k$ is even, and
\[
\frac{1}{|C(K,X)|} \sum_{\chi\in C(K,X)} \left(f(\chi)-\mu_f(X)\right)^k \ll c_k \sigma_f(X)^{k-1} k^{3/2}
\]
if $k$ is odd.  This proves the theorem.
\end{proof}

\section{Proof of Main Theorems}\label{pfofmain}
In order to apply Theorem \ref{EKthm} to the additive function $g(\chi)$ defined in (\ref{gdef}), we need to evaluate the quantities $\mu_g(X)$ and $\sigma_g(X)$. We begin with the following




\begin{proposition}\label{sumprop}
Let $c \in K^\times$ be non-square. Then \begin{equation*}\sum_{\N\p \le X}\frac{1 +\left (\frac{c}{\p} \right ) }{\mathbf{N}\p} = \log \log X + O(1).\end{equation*}
\end{proposition}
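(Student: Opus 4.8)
The plan is to recognize $1+\left(\frac{c}{\p}\right)$ as a splitting count in the quadratic extension $L:=K(\sqrt{c})$, which reduces the estimate to a single Mertens-type sum over the prime ideals of $L$. Since $c$ is non-square, $L/K$ is a genuine quadratic extension, so for a prime $\p\nmid 2c$ of $K$ the quadratic residue symbol $\left(\frac{c}{\p}\right)$ is $+1$ when $\p$ splits in $L$ and $-1$ when $\p$ is inert. Hence $1+\left(\frac{c}{\p}\right)$ equals the number of primes $\mathfrak{P}$ of $L$ lying above $\p$ of relative residue degree one, and every such $\mathfrak{P}$ satisfies $\N_{L/\Q}\mathfrak{P}=\N\p$ (its residue field coincides with that of $\p$).

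First I would discard the finitely many primes $\p\mid 2c$ together with the finitely many primes ramifying in $L/K$; these contribute $O(1)$. For the remaining $\p$, the observation above gives
\[
\sum_{\N\p\le X}\frac{1+\left(\frac{c}{\p}\right)}{\N\p}=\sum_{\substack{\mathfrak{P}\text{ of }L,\ f(\mathfrak{P}/\p)=1\\ \N_{L/\Q}\mathfrak{P}\le X}}\frac{1}{\N_{L/\Q}\mathfrak{P}}+O(1),
\]
since each split $\p$ furnishes two primes of $L$, each of norm $\N\p$, matching the weight $2=1+\left(\frac{c}{\p}\right)$. Next I would complete the right-hand sum to a sum over \emph{all} primes of $L$: the omitted primes are exactly those inert in $L/K$, and an inert $\p$ yields a single prime of $L$ with $\N_{L/\Q}\mathfrak{P}=\N\p^{2}$, so the omitted reciprocals sum to at most $\sum_\p \N\p^{-2}\le\zeta_K(2)=O(1)$. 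Therefore the right-hand side equals $\sum_{\N_{L/\Q}\mathfrak{P}\le X}(\N_{L/\Q}\mathfrak{P})^{-1}+O(1)$.

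Finally I would invoke the number-field analogue of Mertens' second theorem,
\[
\sum_{\N_{L/\Q}\mathfrak{P}\le X}\frac{1}{\N_{L/\Q}\mathfrak{P}}=\log\log X+O(1),
\]
and combine the three displays to conclude. This analytic estimate is the crux and the main obstacle: it rests on $\zeta_L(s)$ having a simple pole at $s=1$, from which one isolates the $m=1$ term of $\log\zeta_L(s)=\sum_{\mathfrak{P},m}\frac{1}{m}(\N_{L/\Q}\mathfrak{P})^{-ms}$ (the terms with $m\ge 2$ converging for $\Re s>\tfrac12$) to obtain $\sum_{\mathfrak{P}}(\N_{L/\Q}\mathfrak{P})^{-s}=\log\frac{1}{s-1}+O(1)$ near $s=1$, and then passes to partial sums via a Tauberian or partial-summation argument. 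I would treat this as a standard consequence of the analytic properties of $\zeta_L$ and cite it accordingly.

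An alternative would split the original sum directly as $\sum_{\N\p\le X}\N\p^{-1}+\sum_{\N\p\le X}\left(\frac{c}{\p}\right)\N\p^{-1}$, applying Mertens for $K$ to the first term and the holomorphy and non-vanishing of $L(s,\chi_c)$ at $s=1$ to the second. However, bounding the partial sums of the oscillating character sum requires essentially the same Tauberian input, whereas the degree-one reformulation above neatly absorbs all of the work into a single Mertens sum with nonnegative terms over the primes of $L$.
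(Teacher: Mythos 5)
Your argument is correct, but it takes a genuinely different route from the paper's. The paper's proof is the one you sketch only as an ``alternative'' in your final paragraph: split the sum as $\sum_{\N\p\le X}\N\p^{-1}+\sum_{\N\p\le X}\left(\frac{c}{\p}\right)\N\p^{-1}$, get $\log\log X+O(1)$ from the first piece via the prime ideal theorem, and get $O(1)$ from the second because the Hecke $L$-function attached to the nontrivial character of $\Gal(K(\sqrt{c})/K)$ is analytic and non-vanishing on $\Re(s)=1$. Your primary route instead reinterprets $1+\left(\frac{c}{\p}\right)$ as the number of degree-one primes of $L=K(\sqrt{c})$ lying over $\p$, so that after discarding the finitely many ramified primes and absorbing the inert primes into a convergent $\sum_\p \N\p^{-2}$, the whole estimate collapses to Mertens' theorem for the single field $L$. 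What this buys is a sum with nonnegative terms whose only analytic input is the simple pole of $\zeta_L(s)$ at $s=1$ (from which Mertens for a number field follows by standard partial summation, as in Rosen's elementary proof), so you never confront the oscillating character sum directly; the non-vanishing $L(1,\chi_c)\neq 0$ is of course still implicitly present via the factorization $\zeta_L=\zeta_K\cdot L(s,\chi_c)$. Both arguments are complete at the level of detail given, and yours is arguably the more self-contained of the two.
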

\begin{proof}
This is a consequence of the prime ideal theorem and the fact that the Hecke $L$-function attached to the non-trivial character of $\mathrm{Gal}(K(\sqrt{c})/K)$ is analytic and non-vanishing on the line $\Re(s)=1$.
\end{proof}

We now decompose $\mu_g(X)$ as \begin{equation*}\mu_g(X) = \frac{1}{2}\sum_{\genfrac{}{}{0pt}{}{\mathbf{N}\p < X}{p \nmid 2\Delta \infty}}\frac{1 +\left (\frac{\Delta^\prime}{\p} \right ) }{\mathbf{N}\p} - \frac{1}{2}\sum_{\genfrac{}{}{0pt}{}{\mathbf{N}\p < X}{\p \nmid 2\Delta \infty}}\frac{1 +\left (\frac{\Delta}{\p} \right ) }{\mathbf{N}\p},\end{equation*} and it immediately follows from Proposition \ref{sumprop} that $\mu_g(X) = O(1)$. We also rewrite $\sigma_g(X)$ as \begin{equation*}\sigma_g(X) =\left (  \displaystyle{ \sum_{\genfrac{}{}{0pt}{}{\mathbf{N} \p \le X,  \p \nmid 2\Delta\infty }{ \left (\frac{\Delta}{\p} \right ) \ne \left ( \frac{\Delta^\prime}{\p} \right )} }\frac{1}{\mathbf{N} \p}} \right )^{1/2} =\left (\frac{1}{2}\displaystyle{ \sum_{\genfrac{}{}{0pt}{}{\mathbf{N}\p \le X}{\p \nmid 2\Delta\infty }}\frac{1 - \left (\frac{\Delta\Delta^\prime}{\p} \right ) }{\mathbf{N} \p}} \right )^{1/2}.\end{equation*}

In order to apply Proposition \ref{sumprop} to $\sigma_g(X)$, we therefore need $\Delta\Delta^\prime$ to be non-square in $K$. This will be the case when $E$ does not have a cyclic 4-isogeny defined over $K(E[2])$.

\begin{proposition}\label{charac}
If $E$ is an elliptic curve with $E(K)[2] \simeq \Zt$ that does not have a cyclic 4-isogeny defined over $K(E[2])$, then $\Delta\Delta^\prime \not \in (K^\times)^2$
\end{proposition}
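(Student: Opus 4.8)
The plan is to prove the contrapositive: if $\Delta\Delta^\prime \in (K^\times)^2$, then $E$ acquires a cyclic $4$-isogeny over $K(E[2])$, contradicting the hypothesis. First I would fix a convenient model. Since $E(K)[2]\simeq\Zt$ and $\mathrm{char}\,K=0$, I can translate the rational two-torsion point to the origin and write $E: y^2 = x(x^2+ax+b)$ with $a,b\in K$, $b\neq 0$, and $D:=a^2-4b\notin(K^\times)^2$; this non-squareness is exactly the statement that the quadratic factor is irreducible, so that $K(E[2])=K(\sqrt D)$ is a genuine quadratic extension. The standard two-isogeny with kernel $\langle(0,0)\rangle$ then has target $E^\prime: y^2 = x(x^2-2ax+D)$.

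Next I would reduce the hypothesis to a clean condition on $b$ and $D$. Because the discriminants of distinct integral models differ by a twelfth power, hence by a square, the class of $\Delta$ in $K^\times/(K^\times)^2$ is independent of the model, and I may compute with the chosen models: $\Delta = 16\,b^2 D$ and $\Delta^\prime = 256\,b\,D^2$, whence $\Delta\Delta^\prime \equiv (bD)^3 \equiv bD \pmod{(K^\times)^2}$. Thus $\Delta\Delta^\prime\in(K^\times)^2$ is equivalent to $bD\in(K^\times)^2$, which in turn says $b\equiv D\pmod{(K^\times)^2}$ and hence $\sqrt b\in K(\sqrt D)=K(E[2])$.

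Finally I would exhibit the isogeny explicitly. Applying the duplication formula on $E$, a point $Q$ satisfies $2Q=(0,0)$ precisely when its $x$-coordinate is a root of $x^2=b$, i.e.\ $x(Q)=\pm\sqrt b$. Set $L:=K(E[2])$. Since $\sqrt b\in L$, the set $H=\{O,\,(\sqrt b,v),\,(0,0),\,(\sqrt b,-v)\}$, where $v^2=\sqrt b\,(2b+a\sqrt b)\in L$, is stable under $G_L$ (Galois either fixes or swaps the two points $(\sqrt b,\pm v)$), and $Q=(\sqrt b,v)$ has order $4$ because $2Q=(0,0)\neq O$. Hence $H$ is a $G_L$-stable cyclic subgroup of order $4$, so $E\to E/H$ is a cyclic $4$-isogeny defined over $L=K(E[2])$, contrary to assumption.

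The only genuinely delicate point is the bookkeeping in the middle step: verifying the discriminant formulas for $E$ and $E^\prime$ and confirming that $\Delta\Delta^\prime$ collapses to $bD$ modulo squares; everything else is either a standard normalization or a direct consequence of the duplication formula. I would emphasize that only the single implication ``$bD$ a square $\Rightarrow$ a cyclic $4$-isogeny exists'' is needed, so I never classify all cyclic $4$-isogenies over $K(E[2])$; producing the one subgroup $H$ suffices.
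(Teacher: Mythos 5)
Your proof is correct, but it takes a genuinely different route from the paper's. You prove the contrapositive by explicit construction: you normalize to $E\colon y^2=x(x^2+ax+b)$, compute $\Delta\equiv b^2D\cdot$ and $\Delta'\equiv bD^2$ modulo squares so that $\Delta\Delta'\equiv bD$, and then, assuming $bD$ is a square, use the duplication formula to exhibit an explicit order-$4$ point $Q=(\sqrt b,v)$ with $2Q=(0,0)$ whose cyclic group $\langle Q\rangle$ is $G_{K(E[2])}$-stable, yielding the forbidden $4$-isogeny via V\'elu. The paper instead argues structurally and in the direct direction: it picks $Q'\in E'[2]\setminus C'$, lifts it to $Q\in E[4]$ with $2Q=P$ via the relation $\hat\phi\circ\phi=[2]$, and uses the absence of a cyclic $4$-isogeny over $M=K(E[2])$ to produce $\sigma\in G_M$ moving $Q$ outside $\langle Q\rangle$, hence moving $Q'$; this shows $K(E'[2])\not\subset M$, so $K(\sqrt{\Delta})$ and $K(\sqrt{\Delta'})$ are distinct quadratic extensions and $\Delta\Delta'$ is a nonsquare. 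The two arguments are essentially dual: your identification $\Delta\equiv D$, $\Delta'\equiv b$ modulo squares makes the coincidence $K(E[2])=K(E'[2])$ concrete and hands you the $4$-torsion point explicitly, at the cost of the model-dependent bookkeeping you flag (which does check out: the cubic discriminants are $b^2D$ and $16bD^2$, and $v^2=\sqrt b\,(2b+a\sqrt b)\neq 0$ since $D\neq 0$); the paper's argument is coordinate-free and makes transparent why the relevant dichotomy is whether $E'[2]$ is already rational over $K(E[2])$. Both are complete proofs of the proposition.
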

\begin{proof}
Let $Q^\prime \in E^\prime[2] - C^\prime$, $C = \langle P \rangle$, and take $Q \in E[4]$ with $\phi(Q) = Q^\prime$. Since $Q^\prime \in E^\prime(K)[2] - C^\prime$, and both $\phi \circ \hatphi = [2]_{E^\prime}$ and $\hatphi \circ \phi = [2]_{E}$, it follows that $2Q = \hatphi(Q^\prime) = P.$ Let $M = K(E[2])$. Since $E$ has no cyclic 4-isogeny defined over $M$, there exists $\sigma \in G_M$ such that $\sigma(Q) \not \in \langle Q \rangle = \left \{0 , Q, P, Q + P \right \}.$ In particular, since $\phi^{-1}(Q^\prime) \subset \langle Q \rangle$, we get that $\phi(\sigma(Q)) \ne Q^\prime$. We then get that \begin{equation*}\sigma(Q^\prime) = \sigma(\phi(Q)) = \phi(\sigma(Q)) \ne Q^\prime,\end{equation*} showing that $Q^\prime$ is not defined over $M$, and therefore that $K(E^\prime[2]) \not \subset M$. It then follows that $K(E[2])$ and $K(E^\prime[2])$ are disjoint quadratic extensions of $K$ and that $E^\prime(K)[2] \simeq \Zt$. As $K(E[2])$ and $K(E^\prime[2])$ are given by $K(\sqrt{\Delta})$ and $K(\sqrt{\Delta^\prime})$ respectively, it follows that $\Delta\Delta^\prime \not \in (K^\times)^2$.
\end{proof}


By Proposition \ref{sumprop}, we therefore get that $\sigma_g(X) = \sqrt{\frac{1}{2}\log \log X}+ O(1)$ whenever $E$ does not have a cyclic 4-isogeny defined over $K(E[2])$.


\begin{proof}[Proof of Theorem \ref{Tconverge}]
Applying Theorem \ref{EKthm} to $g(\chi)$, we get that \begin{equation}\label{limeq} \lim_{X\to\infty} \frac{ \left | \left \{ \chi\in C(K, X) : g(\chi) - O(1) \leq z \left (\sqrt{\frac{1}{2} \log \log X} + O(1) \right ) \right \} \right |}{ |C(K,X)|} = G(z) \end{equation} for every $z \in \R$. By Proposition \ref{TT}, there is some contant $C$, independent of $\chi$, such that $|g(\chi) - \ord_2 \T(E^\chi/E^{\prime \chi})| <  C$, so in fact (\ref{limeq}) holds with $g(\chi)$ replaced by $\ord_2 \T(E^\chi/E^{\prime \chi})$ and the result follows.
\end{proof}

\begin{proof}[Proof of Theorem \ref{mainthm}]
By Theorem \ref{Tconverge}, \begin{equation*}\lim_{X \rightarrow \infty} \displaystyle{ \frac{ |\{\chi \in C(K,X) : \ord_2 \T(E^\chi/E^{\prime \chi}) \ge r \}| }{|C(K, X)|} } = \frac{1}{2}\end{equation*} for any fixed $r \ge 0$. As $d_2(E^\chi/K) \ge  \ord_2 \T(E^\chi/E^{\prime \chi}) - 2$, this shows that for any $\epsilon > 0$, \begin{equation*}\displaystyle{ \frac{ |\{\chi \in C(K, X) : d_2 (E^\chi/K) \ge r \}| }{|C(K, X)|} } \ge \frac{1}{2} - \epsilon\end{equation*} for sufficiently large $X$.
\end{proof}

\bibliography{citations.bib}

\end{document}